\newtheorem{theorem}{Theorem}
\newtheorem{lemma}[theorem]{Lemma}
\newenvironment{proof}[1][Proof]{\textbf{#1.} }{\ \rule{0.5em}{0.5em}}
\newtheorem{conjecture}[theorem]{Conjecture}
\begin{document}

\title{Tensor powers for non-simply laced Lie algebras\\
$B_2$-case }
\author{P P Kulish, V D Lyakhovsky and O V Postnova}

\author{P P Kulish$^1$, V D Lyakhovsky$^2$, and O V Postnova$^3$ \\
$^1$ Sankt-Petersburg Branch of \\
V A Steklov Mathematical Institute  RAS\\
$^2$,$^3$ Sankt-Petersburg State University,\\
High Energy Physics and Elementary Particles Department\\
$^1$e-mail: kulish@pdmi.ras.ru,\\
$^2$ lyakh1507@nm.ru\\
$^3$ olgapostnova@mail.ru
}

\maketitle

\begin{abstract}
We study the decomposition problem for tensor powers of $B_2$-fundamental modules.
To solve this problem singular weight technique and injection fan algorithms are applied. Properties of multiplicity coefficients are formulated in terms of multiplicity functions.
These functions are constructed showing explicitly the dependence of multiplicity coefficients on the highest weight coordinates and the tensor power parameter. It is thus possible to study general properties of multiplicity coefficients for powers of the fundamental $B_2$- modules.
\end{abstract}

\vspace{2 mm}

\section{Introduction}

\label{sec:Introduction}

Consider an analog of the Brauer centralizer algebras for the spinor groups
and define the subspaces of the tensor space $\left( \otimes ^{p}\mathbf{V}%
^{n}\right) $ on which the symmetric group $S_{k}$ and $\mathrm{Spin}(n)$
act as the dual pair (in a direct product form). Here $\mathbf{V}^{n}$ is
the fundamental representation of $\mathrm{Spin}(n)$. Namely the
centralizer algebra of the orthogonal group in $\left( \otimes ^{p}\mathbf{V}%
^{n}\right) $ is generated by the symmetric group $S_{k}$ and the
contractions and the immersions of the invariant form and is called the
Brauer centralizer algebras. To proceed further one needs the
list of $\mathrm{Spin}(n)$-irreducible subspaces in the decomposition $%
\left( \otimes ^{p}\mathbf{V}^{n}\right) =\sum_{\mu \in P}m_{\mu }^{p}%
\mathbf{V}^{\left( \mu \right) }$ ($P$ is the $\mathrm{Spin}(n)$ weight
space) and their multiplicities $m_{\mu }^{p}$. As far as we are interested
in an arbitrary power $p$ of the fundamental module $\mathbf{V}_{\mathrm{Spin%
}\left( n\right) }^{n}$ our main problem is to find multiplicities of
submodules in a form of \textit{multiplicity functions} $M\left( \mu
,p\right) $ explicitly depending on the corresponding highest weight $\mu $
and the power parameter $p$.

There are numerous combinatorial studies of the problem \cite
{KirillovReshetikhin1996, Kleber1996, Kleber1998, Chari2000} and also series
of works dealing with fermonic formulas, some of them based on crystal basis
approach \cite{HatayamaKuniba1998,
HatayamaKuniba2000,HatayamaKuniba2001,NaitoSagaki2006}. On this way
important general results were obtained \cite{LeducRam1997,KulishManojlovicNagy2010,Kumar2010}.
On the other hand practical
computations with the corresponding formulas are scarcely possible for all
but the simplest examples. In most of these studies the simply laced
algebras are considered and as a rule the multiplicities formulas are
connected with complicated path countings.

We must mention also an algorithm for tensor product decompositions proposed
in \cite{Klimyk1968} and improved in \cite{KlimykSchmudgen1997}.\ It is used
in our investigations.

Summing up, we are to find multiplicities $m_{\mu }^{p}$ in the
decomposition $\left( \otimes ^{p}\mathbf{V}^{n}\right) =\sum_{\mu \in
P}m_{\mu }^{p}\mathbf{V}^{\left( \mu \right) }$ as a function of $\mu $\ and
$p$\ . To solve this problem we propose an algorithm based on singular
weights properties \cite{Feigin1986} and the injection fan technique \cite
{IlyinKulishLyakhovsky2009,LyakhNaz2011}. We study the multiplicities $%
m_{\mu }^{p}$ formulated in terms of multiplicity functions $M_{\frak{g}%
}\left( \mu ,p\right) $. The latter have the weight space $\mathcal{L}P$ for
the domain of definition. On the sublattice $P^{++}$ of dominant weights the
multiplicity function gives us the desired multiplicities, $M\left( \mu
,p\right) |_{\mu \in P^{++}}=m_{\mu }^{p}$. In this paper we shall show how
to adopt these tools to non-simply laced algebras and shall demonstrate how
they work by studying the tensor powers $\left( L_{B_{n}}^{\omega
_{i}}\right) ^{\otimes ^{p}}$ of the fundamental module $L_{B_{2}}^{\omega
_{i}}$ of $B_{2}$.

\section{Basic definitions and relations.}

$\frak{g}$ -- simple Lie algebra of the series $B_{n}$ ,

$L^{\mu }$\ \ -- the integrable module of $\frak{g}$ with the highest weight
$\mu $\ ;

$r$ -- the rank of the algebra $\frak{g}$ ;

$\Delta $ -- the root system; $\Delta ^{+}$ -- the positive root system for $%
\frak{g}$ ;

$\mathcal{N}^{\mu }$ -- the weight diagram of $L^{\mu }$ ;

$W$ -- the Weyl group;

$C^{\left( 0\right) }$ -- the fundamental Weyl chamber, $\overline{C^{\left(
0\right) }}$ -- its closure;

$\rho $\ -- the Weyl vector;

$\epsilon \left( w\right) :=\det \left( w\right) $ , $w \in W$;

$\alpha _{i}$ -- the $i$-th simple root for $\frak{g}$ ; $i=0,\ldots ,r$ ;

$\omega _{i}$ -- the $i$-th fundamental weight for $\frak{g}$ ; $i=0,\ldots
,r$ ;

$L_{\frak{g}}^{\omega _{i}}$ -- the $i$-th fundamental module;

$\left\{ e^{i}\right\} _{\mid i=1,\ldots ,r}$ -- the natural Euclidean basis
of the weight space (the $e$-basis), $\left\{ v_{i}\right\} $ -- the
coordinates of a weight in the $e$-basis;

$P$ -- the weight lattice $\mathcal{L}P$ -- the weight space;

$Q$ -- the root lattice;

$\mathcal{E}$\ -- the group algebra of the group $P$ ;

$\Psi ^{\left( \mu \right) }:=\sum\limits_{w\in W}\epsilon (w)e^{w\circ (\mu
+\rho )-\rho }$ -- the singular element for the $\frak{g}$-module $L^{\mu }$;

$\widehat{\Psi ^{\left( \mu \right) }}$ -- the set of singular weights $\psi
\in P$ for the module $L^{\mu }$ with the coordinates $\left( \psi ,\epsilon
\left( w\left( \psi \right) \right) \right) \mid _{\psi =w\left( \psi
\right) \circ (\mu +\rho )-\rho }$;

$\mathrm{ch}\left( L^{\mu }\right) $ -- the formal character of $L^{\mu }$ ;

$\mathrm{ch}\left( L^{\mu }\right) =\frac{\sum_{w\in W}\epsilon (w)e^{w\circ
(\mu +\rho )-\rho }}{\prod_{\alpha \in \Delta ^{+}}\left( 1-e^{-\alpha
}\right) }=\frac{\Psi ^{\left( \mu \right) }}{\Psi ^{\left( 0\right) }}$ --
the Weyl formula;

$R:=\prod_{\alpha \in \Delta ^{+}}\left( 1-e^{-\alpha }\right) =\Psi
^{\left( 0\right) }$ -- the denominator;

$M_{\frak{g}}^{\omega _{i}}\left( \mu ,p\right) $ -- the multiplicity
function corresponding to the decomposition $\left( L_{\frak{g}}^{\omega
_{i}}\right) ^{\otimes ^{p}}=\sum m_{\mu }^{\left( i\right) p}L_{\frak{g}%
}^{\mu }$ , $M_{\frak{g}}^{\omega _{i}}\left( \mu ,p\right) |_{\mu \in
P^{++}}=m_{\mu }^{\left( i\right) p}.$

\section{Some useful properties.}

\begin{lemma}
The projection $\Psi _{\downarrow \frak{g}}^{\left( \nu ,\xi \right) }$ of
the singular element $\Psi _{\frak{g}\oplus \frak{g}}^{\left( \nu ,\xi
\right) }$ for the irreducible representation $L^{\left( \nu _{1},\ldots
,\nu _{r}\right) }\otimes L^{\left( \xi _{1},\ldots ,\xi _{r}\right) }$ of
the direct sum $\frak{g}\oplus \frak{g}$ on the weight space of the diagonal
subalgebra $\frak{g}\rightarrow \frak{g}\oplus \frak{g}$. is equal to the
product
\begin{equation*}
\Psi _{\downarrow \frak{g}}^{\left( \nu ,\xi \right) }=\Psi _{\frak{g}}^{\nu
}\Psi _{\frak{g}}^{\xi },
\end{equation*}
Let $\left\{ \psi _{k}^{\left( \nu \right) }\mid \psi _{k}^{\left( \nu
\right) }\in \Psi ^{\left( \nu \right) },k=1,\ldots ,\#W\right\} _{\mid }$
and $\left\{ \psi _{p}^{\left( \xi \right) }\mid \psi _{p}^{\left( \xi
\right) }\in \Psi ^{\left( \xi \right) },p=1,\ldots ,\#W\right\} $ be the
sets of singular weights for the modules $L^{\left( \nu _{1},\ldots ,\nu
_{r}\right) }$ and $L^{\left( \xi _{1},\ldots ,\xi _{r}\right) }$
correspondingly then the set $\widehat{\Psi _{\downarrow \frak{g}}^{\left(
\nu ,\xi \right) }}$ consists of the weights $\left\{ \psi _{k}^{\left( \nu
\right) }+\psi _{p}^{\left( \xi \right) },\epsilon \left( w\left( \psi
_{k}^{\left( \nu \right) }\right) \right) \epsilon \left( w\left( \psi
_{p}^{\left( \xi \right) }\right) \right) \right\} .$
\end{lemma}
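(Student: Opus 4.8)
The plan is to compute the left-hand side directly from the definition of the singular element, exploiting the product structure of $\frak{g}\oplus\frak{g}$, and then to apply the restriction map to the diagonal subalgebra, which on the level of group algebras is simply multiplication. First I would record the structural facts about the direct sum: the Weyl group of $\frak{g}\oplus\frak{g}$ is the product $W\times W$ acting componentwise, its Weyl vector is $(\rho,\rho)$, and the sign character factorizes as $\epsilon(w_1,w_2)=\epsilon(w_1)\epsilon(w_2)$. Substituting these into the definition $\Psi^{(\mu)}=\sum_{w\in W}\epsilon(w)e^{w\circ(\mu+\rho)-\rho}$ gives
\[
\Psi_{\frak{g}\oplus\frak{g}}^{(\nu,\xi)}=\sum_{w_1,w_2\in W}\epsilon(w_1)\epsilon(w_2)\,e^{\left(w_1\circ(\nu+\rho)-\rho,\ w_2\circ(\xi+\rho)-\rho\right)}.
\]
Since the exponent splits into its two components and the coefficient factorizes, this double sum factors inside $\mathcal{E}\otimes\mathcal{E}$ as the tensor product $\Psi_{\frak{g}}^{\nu}\otimes\Psi_{\frak{g}}^{\xi}$.

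Next I would identify the projection precisely. The diagonal embedding $x\mapsto(x,x)$ restricts a weight $(\lambda,\mu)$ of $\frak{g}\oplus\frak{g}$ to $\lambda+\mu$ on the diagonal Cartan subalgebra; hence on the group algebra it is the algebra homomorphism $\pi:\mathcal{E}\otimes\mathcal{E}\to\mathcal{E}$ determined by $e^{\lambda}\otimes e^{\mu}\mapsto e^{\lambda+\mu}$, that is, the multiplication map. Applying $\pi$ to the factored form immediately yields $\Psi_{\downarrow\frak{g}}^{(\nu,\xi)}=\Psi_{\frak{g}}^{\nu}\,\Psi_{\frak{g}}^{\xi}$, which is the first assertion. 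The description of $\widehat{\Psi_{\downarrow\frak{g}}^{(\nu,\xi)}}$ then follows by expanding this product term by term: the $(k,p)$ summand is $\epsilon(w(\psi_k^{(\nu)}))\,e^{\psi_k^{(\nu)}}\cdot\epsilon(w(\psi_p^{(\xi)}))\,e^{\psi_p^{(\xi)}}=\epsilon(w(\psi_k^{(\nu)}))\epsilon(w(\psi_p^{(\xi)}))\,e^{\psi_k^{(\nu)}+\psi_p^{(\xi)}}$, giving exactly the listed weight–sign pairs.

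I do not expect a serious obstacle here, as the content is essentially bookkeeping about the direct sum. The one point that requires care is the interpretation of the phrase ``projection onto the diagonal weight space'' as the multiplication homomorphism $\pi$; this is precisely what makes the tensor factorization collapse to an ordinary product. A secondary subtlety is that the enumeration of $\widehat{\Psi_{\downarrow\frak{g}}^{(\nu,\xi)}}$ is a statement about the $(\#W)^2$ terms of the formal product: distinct pairs $(\psi_k^{(\nu)},\psi_p^{(\xi)})$ may produce coinciding sums $\psi_k^{(\nu)}+\psi_p^{(\xi)}$, so collecting like terms (adding their signs) may be needed before one reads off the genuine singular weights. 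This grouping does not affect the product identity itself, so the proof is complete once the factorization and the action of $\pi$ are established.
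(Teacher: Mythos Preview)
Your proposal is correct and follows essentially the same route as the paper: both use that $W_{\frak{g}\oplus\frak{g}}=W\times W$, $\rho_{\frak{g}\oplus\frak{g}}=(\rho,\rho)$, and $\epsilon(w_1,w_2)=\epsilon(w_1)\epsilon(w_2)$ to factor the singular element, and both identify the projection to the diagonal as the addition map $(\lambda,\mu)\mapsto\lambda+\mu$ on weights (the paper does this in explicit $e$-basis coordinates, you phrase it as the multiplication homomorphism on the group algebra). Your remark about possible coincidences $\psi_k^{(\nu)}+\psi_p^{(\xi)}=\psi_{k'}^{(\nu)}+\psi_{p'}^{(\xi)}$ is a point the paper does not raise, but it does not affect the argument.
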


\begin{proof}
Let $\left\{ e^{1},e^{2},\ldots ,e^{r},e^{r+1},\ldots e^{2r}\right\} $ be
the weight space basis for $\mathcal{L}P\left( \frak{g}\oplus \frak{g}%
\right) $, $\left( \nu _{1},\ldots ,\nu _{r},\xi _{1},\ldots ,\xi
_{r}\right) $ -- the coordinates for the highest weight $\left( \nu ,\xi
\right) $ naturally belonging to the space $\mathcal{L}P\left( \frak{g}%
\oplus \frak{g}\right) $. The weights $v^{\left( \nu \right) }\in \mathcal{N}%
^{\nu }$ , $u^{\left( \xi \right) }\in \mathcal{N}^{\xi }$\ and the singular
vectors $\psi _{k}^{\left( \nu \right) }$ and $\psi _{p}^{\left( \xi \right)
}$ also are lifted to the space $\mathcal{L}P\left( \frak{g}\oplus \frak{g}%
\right) $
\begin{eqnarray*}
lv_{a}^{\left( \nu \right) } &\Rightarrow &\left( \nu _{a1}^{\left( \nu
\right) },\ldots ,\nu _{ar}^{\left( \nu \right) },0,\ldots ,0\right)
;lu_{b}^{\left( \xi \right) }\Rightarrow \left( 0,\ldots ,0,u_{b1}^{\left(
\eta \right) },\ldots ,u_{br}^{\left( \eta \right) }\right)  \\
l\psi _{k}^{\left( \nu \right) } &\Rightarrow &\left( \psi _{k1}^{\left( \nu
\right) },\ldots ,\psi _{kr}^{\left( \nu \right) },0,\ldots ,0\right) ;l\psi
_{p}^{\left( \xi \right) }\Rightarrow \left( 0,\ldots ,0,\psi _{p1}^{\left(
\xi \right) },\ldots ,\psi _{pr}^{\left( \xi \right) }\right)
\end{eqnarray*}
The set $\left\{ lv_{a}+lu_{b}\mid a=1,\ldots ,\dim \left( L^{\nu }\right)
,b=1,\ldots ,\dim \left( L^{\xi }\right) \right\} $ forms the weight diagram
$\mathcal{N}_{\frak{g}\oplus \frak{g}}^{\left( \nu ,\xi \right) }$ of $L_{%
\frak{g}\oplus \frak{g}}^{\left( \nu ,\xi \right) }$ . As far as for the
Weyl group $W_{\frak{g}\oplus \frak{g}}$ we have $W_{\frak{g}\oplus \frak{g}%
}=W\times W$ and the Weyl vector is $\rho _{\frak{g}\oplus \frak{g}}=\left(
\rho ,\rho \right) $\ , the set of singular weights $\widehat{\Psi _{\frak{g}%
\oplus \frak{g}}^{\left( \nu ,\xi \right) }}$ is formed by the vectors whose
first $2r$ coordinates are $\left\{ l\psi _{k}^{\left( \nu \right) }+l\psi
_{p}^{\left( \xi \right) }\mid k,p=1,\ldots ,\#W\right\} $ and the last one
is equal to the product $\epsilon \left( w\left( \psi _{k}^{\left( \nu
\right) }\right) \right) \epsilon \left( w\left( \psi _{p}^{\left( \xi
\right) }\right) \right) $ :
\begin{equation*}
\widehat{\Psi _{\frak{g}\oplus \frak{g}}^{\left( \nu ,\xi \right) }}=\left\{
l\psi _{k}^{\left( \nu \right) }+l\psi _{p}^{\left( \xi \right) },\epsilon
\left( w\left( \psi _{k}^{\left( \nu \right) }\right) \right) \epsilon
\left( w\left( \psi _{p}^{\left( \xi \right) }\right) \right) \mid
k,p=1,\ldots ,\#W\right\} .
\end{equation*}
The vector $\left( c_{1},\ldots ,c_{r},c_{r+1},\ldots ,c_{2r}\right) \in
\mathcal{L}P\left( \frak{g}\oplus \frak{g}\right) $ being projected to the
diagonal subalgebra weight space $\mathcal{L}P$ in the basis $\left\{ \left(
e^{1}+e^{r+1}\right) /2,\ldots ,\left( e^{r}+e^{2r}\right) /2\right\} $ has
the coordinates $\left( \left( c_{1}+c_{r+1}\right) ,\ldots ,\left(
c_{r}+c_{2r}\right) \right) $ . The latter means that
\begin{eqnarray*}
\widehat{\Psi _{\downarrow \frak{g}}^{\left( \nu ,\xi \right) }} &=&\left\{
\psi _{k}^{\left( \nu \right) }+\psi _{p}^{\left( \xi \right) },\epsilon
\left( w\left( \psi _{k}^{\left( \nu \right) }\right) \right) \epsilon
\left( w\left( \psi _{p}^{\left( \xi \right) }\right) \right) \right\} , \\
k,p &=&1,\ldots ,\#W.
\end{eqnarray*}
\end{proof}

Q.E.D.

One of the main tools to study the decomposition properties is the \textit{%
injection fan }$\Gamma _{\frak{g}_{\mathrm{diag}}\rightarrow \frak{g}\oplus
\frak{g}}$. \cite{IlyinKulishLyakhovsky2009,LyakhNaz2011} . To use this
instrument we consider the decomposition of tensor products as a special
case of branching. The latter corresponds to the injection of the diagonal
subalgebra into the direct sum: $\frak{g}_{\mathrm{diag}}\rightarrow \frak{g}%
\oplus \frak{g}$.

\begin{lemma}
The vectors of the injection fan $\Gamma $ for $\frak{g}_{\mathrm{diag}%
}\rightarrow \frak{g}\oplus \frak{g}$ consists of the opposites to the
singular weights of the trivial module $L_{\frak{g}}^{\left( 0\right) }$
\begin{equation*}
\Gamma _{\frak{g}_{\mathrm{diag}}\rightarrow \frak{g}\oplus \frak{g}%
}=-S\circ \widehat{\Psi ^{\left( 0\right) }}\backslash \left( 0,\ldots
,0\right) ,
\end{equation*}
(here S is the full reflection).
\end{lemma}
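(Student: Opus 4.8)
The plan is to unwind the definition of the injection fan from \cite{IlyinKulishLyakhovsky2009,LyakhNaz2011} in the special case of the diagonal embedding and to show that the relevant denominator ratio collapses to the single--copy singular element $\Psi_{\frak g}^{(0)}$. Recall that for an injection $\frak a\rightarrow\frak h$ the fan is read off from the expansion of $\left.\Psi_{\frak h}^{(0)}\right|_{\frak a}\big/\Psi_{\frak a}^{(0)}$: its vectors are the (reflected) nonzero exponents occurring in this ratio, which are exactly the shifts entering the resulting recursion for the branching coefficients. Here $\frak h=\frak g\oplus\frak g$ and $\frak a=\frak g_{\mathrm{diag}}$, so the whole computation reduces to evaluating this ratio for the diagonal subalgebra.

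First I would compute the numerator. Applying the previous lemma with $\nu=\xi=0$ gives $\Psi_{\frak g\oplus\frak g}^{(0,0)}=\Psi_{\frak g}^{(0)}\Psi_{\frak g}^{(0)}$. Projecting to the diagonal Cartan exactly as in that proof --- where a positive root $(\alpha,0)$ of the first copy and $(0,\alpha)$ of the second both restrict to the same root $\alpha$ of $\frak g_{\mathrm{diag}}$ --- each factor $\prod_{\alpha\in\Delta^+}(1-e^{-\alpha})=R$ is reproduced, so $\left.\Psi_{\frak g\oplus\frak g}^{(0,0)}\right|_{\frak g_{\mathrm{diag}}}=R^{2}$. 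Since the denominator of the diagonal algebra is $\Psi_{\frak g_{\mathrm{diag}}}^{(0)}=R$, the ratio equals $R^{2}/R=R=\Psi_{\frak g}^{(0)}$. The essential point is that this quotient is not an infinite series but collapses to the polynomial $\Psi_{\frak g}^{(0)}$; this collapse is the feature special to the tensor--power (diagonal) situation, and it is what forces the fan to be finite and to coincide with the singular weights of the trivial $\frak g$--module.

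It then remains to translate the exponents of $\Psi_{\frak g}^{(0)}=\sum_{w\in W}\epsilon(w)\,e^{w\circ\rho-\rho}$ into fan vectors. The nonzero exponents are precisely the nonzero singular weights of $\widehat{\Psi^{(0)}}$, carrying the signs $\epsilon(w)$, so up to orientation the fan is $\widehat{\Psi^{(0)}}\setminus(0,\ldots,0)$. The orientation is fixed by the convention built into the definition of the fan: the branching recursion shifts a weight in the direction opposite to these exponents, and the passage through the restriction map contributes the full reflection $S$, the two together producing the overall factor $-S$ and hence $\Gamma_{\frak g_{\mathrm{diag}}\rightarrow\frak g\oplus\frak g}=-S\circ\widehat{\Psi^{(0)}}\setminus(0,\ldots,0)$.

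I expect the genuine obstacle to be not the algebra above but the careful bookkeeping of these two sign conventions --- the orientation reversal coming from the recursion and the full reflection $S$ coming from the restriction --- so that the composite is exactly $-S$ and neither $-1$ nor $S$ alone. Verifying this on a rank--one reduction, where $w\circ\rho-\rho$ and its negative are trivial to compare, is the cleanest safeguard against a spurious sign.
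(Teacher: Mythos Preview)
Your argument is correct and matches the paper's approach. The paper proceeds by invoking the defining relation $1-\prod_{\alpha\in\Delta^+_{\downarrow}}(1-e^{-\alpha})^{\mathrm{mult}_{\frak g\oplus\frak g}(\alpha)-\mathrm{mult}_{\frak g_{\mathrm{diag}}}(\alpha)}=\sum_\gamma s(\gamma)e^{-\gamma}$, observes that each projected root has multiplicity $2$ against multiplicity $1$ in the diagonal, and hence obtains $1-\prod_{\alpha\in\Delta^+}(1-e^{-\alpha})=1-R$. Your computation of the ratio $\Psi_{\frak g\oplus\frak g}^{(0)}\big|_{\frak g_{\mathrm{diag}}}/\Psi_{\frak g_{\mathrm{diag}}}^{(0)}=R^2/R=R$ is exactly the same calculation written as a quotient rather than as a product with exponent $2-1$; the subtraction of $1$ in the paper's version is precisely your removal of the zero weight.

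One small remark: your invocation of Lemma~1 already gives the projected singular element as $\Psi_{\frak g}^{(0)}\Psi_{\frak g}^{(0)}=R^2$ directly, so the sentence re-deriving the projection of $(\alpha,0)$ and $(0,\alpha)$ is redundant. Also, the paper's own proof stops at $1-R$ and does not spell out the passage to $-S\circ\widehat{\Psi^{(0)}}\setminus\{0\}$; your closing paragraph on the sign bookkeeping is therefore more than the paper provides, and your caution about checking the $-S$ convention on a rank-one example is well placed, since the notation $-S$ (full reflection composed with a sign) is not made fully explicit in the text either.
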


\begin{proof}
According to the definition \cite{IlyinKulishLyakhovsky2009} the vectors $%
\gamma $ of the fan $\Gamma _{\frak{g}_{\mathrm{diag}}\rightarrow \frak{g}%
\oplus \frak{g}}$ are fixed by the relation $1-\prod_{\alpha \in \left(
\Delta _{\frak{g}\oplus \frak{g}\downarrow \frak{g}_{\mathrm{diag}%
}}^{+}\right) }\left( 1-e^{-\alpha }\right) ^{\mathrm{mult}_{\frak{g}\oplus
\frak{g}}\mathrm{\left( \alpha \right) -{mult}}_{\frak{g}_{\mathrm{diag}}}%
\mathrm{\left( \alpha \right) }}=\sum_{\gamma \in \Gamma _{\frak{g}_{\mathrm{%
diag}}\rightarrow \frak{g}\oplus \frak{g}}}s\left( \gamma \right) e^{-\gamma
}$. The projections of the\ $\frak{g}\oplus \frak{g}$ -roots to the diagonal
subalgebra obviously reproduce the set $\left\{ \alpha _{i}\right\}
_{i=0,\ldots ,r}$ in $\mathcal{L}P\left( \frak{g}\right) $ :
\begin{equation*}
\left.
\begin{array}{c}
\left( \alpha _{i},0\right) _{\downarrow \frak{g}} \\
\left( 0,\alpha _{i}\right) _{\downarrow \frak{g}}
\end{array}
\right\} =\alpha _{i}.
\end{equation*}
Thus $\mathrm{mult}_{\frak{g}}\left( \alpha \right) =2$ while $\mathrm{mult}%
_{\frak{g}\oplus \frak{g}}\left( \alpha \right) =1$ and we have
\begin{eqnarray*}
\sum_{\gamma \in \Gamma _{\frak{g}_{\mathrm{diag}}\rightarrow \frak{g}\oplus
\frak{g}}}s\left( \gamma \right) e^{-\gamma } &=&1-\prod_{\alpha \in \left(
\Delta _{\frak{g}\oplus \frak{g}\downarrow \frak{g}}^{+}\right) }\left(
1-e^{-\alpha }\right) ^{\mathrm{mult}_{\frak{g}\oplus \frak{g}}\left( \alpha
\right) -\mathrm{mult}_{\frak{g}}\left( \alpha \right) }= \\
&=&1-\prod_{\alpha \in \left( \Delta ^{+}\right) }\left( 1-e^{-\alpha
}\right) .
\end{eqnarray*}
Q.E.D.
\end{proof}

\begin{lemma}
The singular element $\Psi _{\frak{g}}^{\left( \xi \right) }$ for the module
$L^{\mu }\otimes L^{\nu }$ can be presented in two equivalent forms:
\begin{equation}
\mathrm{ch}\left( L_{\frak{g}}^{\mu }\right) \Psi _{\frak{g}}^{\nu }=\Psi _{\frak{g}%
}^{\mu }\mathrm{ch}\left( L_{\frak{g}}^{\nu }\right) .
\end{equation}
\end{lemma}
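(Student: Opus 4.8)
The plan is to exploit the Weyl character formula, which expresses each character as its singular element divided by the common denominator $R = \Psi^{(0)}_{\frak{g}}$. First I would write out both characters using the Weyl formula from the definitions: $\mathrm{ch}\left(L_{\frak{g}}^{\mu}\right) = \Psi^{\mu}_{\frak{g}} / R$ and $\mathrm{ch}\left(L_{\frak{g}}^{\nu}\right) = \Psi^{\nu}_{\frak{g}} / R$. Here the crucial observation is that $R$ is a single fixed element of the group algebra $\mathcal{E}$, independent of the highest weight of the module in question; it is the common denominator for every irreducible character of $\frak{g}$.

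Next I would substitute these two expressions into the two sides of the claimed identity. The left-hand side becomes $\left(\Psi^{\mu}_{\frak{g}} / R\right)\,\Psi^{\nu}_{\frak{g}}$ and the right-hand side becomes $\Psi^{\mu}_{\frak{g}}\,\left(\Psi^{\nu}_{\frak{g}} / R\right)$. Since $\mathcal{E}$ is the group algebra of the abelian group $P$, multiplication is commutative, so both sides equal $\Psi^{\mu}_{\frak{g}}\,\Psi^{\nu}_{\frak{g}} / R$. Thus the identity reduces to the trivial rearrangement $\Psi^{\mu}_{\frak{g}}\Psi^{\nu}_{\frak{g}}/R = \Psi^{\nu}_{\frak{g}}\Psi^{\mu}_{\frak{g}}/R$, which holds by commutativity. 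The whole proof is essentially a one-line algebraic manipulation once the Weyl formula is invoked.

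The only genuine subtlety — and the step I would flag as the place requiring care rather than a true obstacle — is the division by $R$. Strictly speaking $\mathcal{E}$ is a commutative ring but not a field, so one should either work in the field of fractions (or the ring of formal characters completed appropriately) where $R$ is invertible, or else clear denominators and argue directly that $\mathrm{ch}\left(L_{\frak{g}}^{\mu}\right)\,R = \Psi^{\mu}_{\frak{g}}$ and $\mathrm{ch}\left(L_{\frak{g}}^{\nu}\right)\,R = \Psi^{\nu}_{\frak{g}}$ as identities in $\mathcal{E}$ itself. I would favour the latter formulation: multiply the target identity through by $R$ and verify that both sides become $\Psi^{\mu}_{\frak{g}}\,\Psi^{\nu}_{\frak{g}}$, using only that $R\cdot\mathrm{ch}\left(L^{\lambda}_{\frak{g}}\right) = \Psi^{\lambda}_{\frak{g}}$ for each $\lambda$ and the commutativity of $\mathcal{E}$. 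This avoids any question of invertibility and keeps the argument entirely inside the group algebra. The conceptual content is simply that the \emph{singular element} of a tensor product factors as the character of one factor times the singular element of the other, symmetrically, which is an immediate consequence of the denominator being shared by all irreducible characters.
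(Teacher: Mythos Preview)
Your proof is correct and follows essentially the same route as the paper: both arguments reduce the identity to the observation that, via the Weyl formula $\mathrm{ch}(L^{\lambda}) = \Psi^{\lambda}/\Psi^{0}$, each side equals $\Psi^{\mu}_{\frak{g}}\Psi^{\nu}_{\frak{g}}/\Psi^{0}_{\frak{g}}$ in the commutative algebra $\mathcal{E}$. The paper additionally invokes Lemma~1 to interpret this common expression as the projected singular element of $L^{\mu}\otimes L^{\nu}$ and to record the further equality $\sum_{\xi} m_{\xi}^{\mu\nu}\Psi^{\xi}_{\frak{g}} = \mathrm{ch}(L^{\mu}_{\frak{g}})\Psi^{\nu}_{\frak{g}}$, but for the displayed identity itself your direct manipulation is exactly what is needed.
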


\begin{proof}
In the Weyl formula for $L^{\mu }\otimes L^{\nu }$,
\begin{equation*}
\mathrm{ch}\left( L^{\mu }\otimes L^{\nu }\right) _{\downarrow P_{\frak{g}%
}^{+}}=\sum_{\xi \in P_{\frak{g}}^{++}}m_{\xi }^{\mu \nu }\mathrm{ch}\left(
L^{\xi }\right) ,
\end{equation*}
apply the result of Lemma 1:
\begin{equation*}
\left( \frac{\Psi _{\frak{g}\oplus \frak{g}}^{\left( \nu ,\xi \right) }}{%
\Psi _{\frak{g}\oplus \frak{g}}^{0}}\right) _{\downarrow P_{\frak{g}}}=\frac{%
\Psi _{\frak{g}}^{\mu }\Psi _{\frak{g}}^{\nu }}{\Psi _{\frak{g}}^{0}\Psi _{%
\frak{g}}^{0}}=\sum_{\xi \in P_{\frak{g}}^{++}}m_{\xi }^{\mu \nu }\frac{\Psi
_{\frak{g}}^{\xi }}{\Psi _{\frak{g}}^{0}}.
\end{equation*}
\begin{equation*}
\left( \left( \Psi _{\frak{g}}^{0}\right) ^{-1}\Psi _{\frak{g}}^{\mu
}\right) \Psi _{\frak{g}}^{\nu }=\Psi _{\frak{g}}^{\mu }\left( \left( \Psi _{%
\frak{g}}^{0}\right) ^{-1}\Psi _{\frak{g}}^{\nu }\right) =\sum_{\xi \in P_{%
\frak{g}}^{++}}m_{\xi }^{\mu \nu }\Psi _{\frak{g}}^{\xi }.
\end{equation*}
Thus we have
\begin{equation}
\sum_{\xi \in P_{\frak{g}}^{++}}m_{\xi }^{\mu \nu }\Psi _{\frak{g}}^{\xi
}=\mathrm{ch}\left( L_{\frak{g}}^{\mu }\right) \Psi _{\frak{g}}^{\nu }=\Psi _{\frak{g}%
}^{\mu }\mathrm{ch}\left( L_{\frak{g}}^{\nu }\right) .
\label{N-recursion}
\end{equation}
Q.E.D.
\end{proof}

Now put $\mu =\omega $, $\nu =\left( p-1\right) \omega $
\begin{equation}
\mathrm{ch}\left( L^{\left( \omega \right) }\right) \Psi ^{\left(
\otimes ^{\left( p-1\right) }\omega \right) }=\sum_{\xi \in
P}M_{\frak{g}}^{\omega _{1}}\left( \xi ,p\right) \Psi ^{\left( \xi
\right) }, \label{N-rec property}
\end{equation}
$M_{\frak{g}}^{\omega _{1}}\left( \xi ,p\right) $ defines the singular
element $\Psi ^{\left( \otimes ^{p}\omega \right) }$. On the other hand
these equations can be considered as a system of recurrent relations for the
multiplicity function $M_{\frak{g}}^{\omega _{1}}\left( \xi ,p\right) $ .

\begin{conjecture}
Let $\frak{g}=B_{2}$, $L^{\mu }$\ and $L^{\omega _{1}}=L^{vect}$ be the
highest weight modules with the highest weights $\mu $ and $\omega _{1}$
(the first fundamental weight). Then the tensor product decomposition $%
\left( L^{\mu }\otimes L^{vect}\right) \downarrow _{\frak{g}_{\mathrm{diag}%
}}=\oplus _{\gamma }L^{\gamma }$ is multiplicity free.
\end{conjecture}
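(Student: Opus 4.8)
The plan is to specialise the recursion of Lemma~3 to $\nu=\omega_{1}$ and to read off each multiplicity as a signed count of reflected weights. Setting $\nu=\omega_{1}$ in (\ref{N-recursion}) gives
\begin{equation*}
\sum_{\xi\in P^{++}}m_{\xi}^{\mu\,\omega_{1}}\,\Psi_{\frak{g}}^{\xi}=\Psi_{\frak{g}}^{\mu}\,\mathrm{ch}\left(L_{\frak{g}}^{\omega_{1}}\right).
\end{equation*}
For $\frak{g}=B_{2}$ the vector module $L^{vect}=L^{\omega_{1}}$ is five-dimensional and all its weights are simple, the weight set being $\left\{0,\pm e^{1},\pm e^{2}\right\}$ (the origin together with the four short roots). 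Hence $\mathrm{ch}\left(L^{\omega_{1}}\right)=\sum_{\lambda}e^{\lambda}$ with $\lambda$ running over these five weights, and
\begin{equation*}
\sum_{\xi}m_{\xi}^{\mu\,\omega_{1}}\,\Psi^{\xi}=\sum_{\lambda}\sum_{w\in W}\epsilon\left(w\right)e^{w\left(\mu+\rho\right)-\rho+\lambda}.
\end{equation*}
Multiplying by $e^{\rho}$ and using that $\Psi^{\xi}e^{\rho}$ has the single dominant exponent $\xi+\rho$ with coefficient $+1$, I would extract
\begin{equation*}
m_{\xi}^{\mu\,\omega_{1}}=\sum_{\lambda}\epsilon\left(w_{\lambda}\right),
\end{equation*}
the sum taken over those $\lambda$ for which $\xi+\rho-\lambda$ is regular and lies in the $W$-orbit of $\mu+\rho$, where $w_{\lambda}$ is the unique element carrying $\mu+\rho$ to $\xi+\rho-\lambda$. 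Everything then reduces to understanding the five-point ``cross'' $\left\{\xi+\rho-\lambda\right\}$ around the regular dominant weight $\eta:=\xi+\rho$.

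The geometric heart of the argument is the following observation, which I would establish by inspecting the two walls of $\overline{C^{\left(0\right)}}$ directly. Writing $\eta=\left(n_{1},n_{2}\right)$ in the $e$-basis, dominance and integrality force $n_{1}-n_{2}\geq 1$ and $n_{2}\geq\tfrac{1}{2}$. Going through the five points $\eta-\lambda$ one by one, each of them lies in the closed chamber $\overline{C^{\left(0\right)}}$, the only one that can fail being $\eta-e^{2}=\left(n_{1},n_{2}-1\right)$; and this point leaves the chamber precisely in the spinor case $n_{2}=\tfrac{1}{2}$, where it crosses the short-root wall $\alpha_{2}$ and its reflection satisfies $s_{\alpha_{2}}\left(\eta-e^{2}\right)=\eta$. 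Consequently the dominant representatives of the five points $\eta-\lambda$ are pairwise distinct, \emph{save} that $\lambda=0$ and $\lambda=e^{2}$ may share the representative $\eta$, and then with opposite signs $\epsilon=+1$ and $\epsilon=-1$.

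With this in hand the bound is immediate. Every positive contribution to $m_{\xi}^{\mu\,\omega_{1}}$ comes from a $\lambda$ for which $\eta-\lambda$ is itself dominant and equal to $\mu+\rho$; since the points $\eta-\lambda$ are distinct there is at most one such $\lambda$, so the positive part of the signed count never exceeds $1$. The only possible negative contribution is the reflected term $\lambda=e^{2}$ in the spinor case, and it can only diminish the count. As the $m_{\xi}^{\mu\,\omega_{1}}$ are genuine tensor-product multiplicities they are non-negative, whence $m_{\xi}^{\mu\,\omega_{1}}\in\left\{0,1\right\}$ for every $\xi$, which is the asserted multiplicity freeness. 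The main obstacle is exactly the short-root wall: it is the non-simply-laced feature of $B_{2}$ (the presence of the zero weight in $L^{vect}$ together with a short simple root) that lets one of the five shifts cross a wall, and the crux is to check that the resulting reflected weight cancels the $\lambda=0$ term rather than producing a multiplicity $2$ or a spurious negative one; away from this wall the five shifts remain dominant and multiplicity freeness is transparent.
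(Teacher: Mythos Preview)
Your argument is correct and is essentially the Brauer--Klimyk rule specialised to $B_{2}$; it proves multiplicity-freeness cleanly. It is, however, a genuinely different route from the paper's own proof.

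The paper proceeds via Lemma~1 and the injection fan $\Gamma_{\frak{g}_{\mathrm{diag}}\rightarrow\frak{g}\oplus\frak{g}}$ of Lemma~2: it writes the projected singular element $\Psi_{\downarrow\frak{g}}^{(\mu,\mathrm{vect})}=\Psi^{\mu}\Psi^{\mathrm{vect}}$, lists the $24$ shifted singular weights $\psi_{s}^{(\mu)}+\psi_{p}^{(\mathrm{vect})}$ near $\overline{C^{(0)}}$, runs the fan algorithm, and reads off the explicit decomposition (five summands in the generic case $\mu_{1}>\mu_{2}\geq 1$, three in each of the boundary cases $\mu=n\omega_{1}$ and $\mu=n\omega_{2}$). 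You instead invoke Lemma~3 with $\nu=\omega_{1}$, turning each $m_{\xi}$ into a signed count over the five-point cross $\{\xi+\rho-\lambda\}$, and then argue geometrically that at most one of these shifts can hit $\mu+\rho$ from inside the chamber, while the lone possible wall-crossing (through the short-root wall when $\eta_{2}=\tfrac{1}{2}$) contributes with sign $-1$ and can only cancel. What the paper's approach buys is the full list of constituents $L^{\gamma}$, which feeds directly into the recursive machinery developed later; what your approach buys is a shorter, more conceptual bound $m_{\xi}\leq 1$ that avoids the fan apparatus and the case analysis entirely. One small remark: calling the boundary situation $\eta_{2}=\tfrac{1}{2}$ the ``spinor case'' is a slight misnomer (it corresponds to $\xi$ lying on the $\alpha_{2}$-wall, i.e.\ $\xi\in\mathbb{Z}_{\geq 0}\omega_{1}$), but this does not affect the mathematics.
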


\begin{proof}
According to Lemma 1 the projected singular element for a $\frak{g}\oplus
\frak{g}$ -module $L^{\mu }\otimes L^{vect}$ is $\Psi _{\frak{g}\oplus \frak{%
g}\downarrow \frak{g}_{\mathrm{diag}}}^{\left( \mu ,vect\right) }=\Psi _{%
\frak{g}}^{\mu }\Psi _{\frak{g}}^{vect}$ and the set of singular weights is
\begin{eqnarray*}
\widehat{\Psi _{\downarrow \frak{g}_{\mathrm{diag}}}^{\left( \mu
,vect\right) }} &=&\left\{ \psi _{k}^{\left( \mu \right) }+\psi _{p}^{\left(
vect\right) },\epsilon \left( w\left( \psi _{k}^{\left( \mu \right) }\right)
\right) \epsilon \left( w\left( \psi _{p}^{\left( vect\right) }\right)
\right) \right\} , \\
k,p &=&1,\ldots ,\#W.
\end{eqnarray*}
Suppose $\mu =\left( \mu _{1},\mu _{2}\right) $ is greater than $\omega
_{1}=\left( 1,0\right) $ and $\mu _{1}>\mu _{2}\geq 1$. The singular weights
of $L^{\mu }$ in the fundamental chamber and its nearest neighbours are
\begin{eqnarray*}
\left\{ \psi _{s}^{\left( \mu \right) }\right\}  &=&\left( \mu _{1},\mu
_{2},\left( +1\right) \right) ,\left( \mu _{1},-\mu _{2}-1,\left( -1\right)
\right) ,\left( \mu _{2}-1,\mu _{1}+1,\left( -1\right) \right)  \\
s &=&1,2,3
\end{eqnarray*}
They will give rise to the 24 weights of the type
\begin{eqnarray*}
\left\{ \psi _{s}^{\left( \mu \right) }+\psi _{p}^{\left( vect\right)
}\right\}  &=&\left( \mu _{1},\mu _{2},\left( +1\right) \right) +\psi
_{p}^{\left( vect\right) }, \\
&&\left( \mu _{1},-\mu _{2}-1,\left( -1\right) \right) +\psi _{p}^{\left(
vect\right) }, \\
&&\left( \mu _{2}-1,\mu _{1}+1,\left( -1\right) \right) +\psi _{p}^{\left(
vect\right) } \\
s &=&1,2,3;p=1,\ldots ,\#W.
\end{eqnarray*}
Applying successively the fan $\Gamma _{\frak{g}_{\mathrm{diag}}\rightarrow
\frak{g}\oplus \frak{g}}$ to the set $\widehat{\Psi _{\downarrow \frak{g}_{%
\mathrm{diag}}}^{\left( \mu ,vect\right) }}$ in the three selected chambers
(starting with the highest weight $\left( \mu _{1}+1,\mu _{2},\left(
+1\right) \right) $) we find the weights:
\begin{eqnarray*}
&&\left( \mu _{1},\mu _{2},\left( +1\right) \right) ,\left( \mu _{1}\pm
1,\mu _{2},\left( +1\right) \right) ,\left( \mu _{1},\mu _{2}\pm 1,\left(
+1\right) \right) , \\
&&\left( \mu _{1},-\mu _{2}-1,\left( -1\right) \right) ,\left( \mu _{1}\pm
1,-\mu _{2}-1,\left( -1\right) \right) ,\left( \mu _{1},-\mu _{2},\left(
-1\right) \right) ,\left( \mu _{1},-\mu _{2}-2,\left( -1\right) \right) , \\
&&\left( \mu _{2}-1,\mu _{1}+1,\left( -1\right) \right) ,\left( \mu _{2},\mu
_{1}+1,\left( -1\right) \right) ,\left( \mu _{2}-2,\mu _{1}+1,\left(
-1\right) \right) , \\
&&\left( \mu _{2}-1,\mu _{1}+2,\left( -1\right) \right) ,\left( \mu
_{2}-1,\mu _{1},\left( -1\right) \right) .
\end{eqnarray*}
Only the first 5 weights are in $\overline{C^{\left( 0\right) }}$ . They are
the highest singular weights for $\left( L^{\mu }\otimes L^{vect}\right)
_{\downarrow \frak{g}_{\mathrm{diag}}}$ and we have the decomposition:
\begin{equation*}
\left( L^{\mu }\otimes L^{vect}\right) _{\downarrow \frak{g}_{\mathrm{diag}%
}}=L^{\mu }\oplus L^{\left( \mu _{1}+1,\mu _{2}\right) }\oplus L^{\left( \mu
_{1}-1,\mu _{2}\right) }\oplus L^{\left( \mu _{1},\mu _{2}+1\right) }\oplus
L^{\left( \mu _{1},\mu _{2}-1\right) }.
\end{equation*}
There are two special cases where the highest weights are on the borders of $%
\overline{C^{\left( 0\right) }}$. For $\mu =n\omega _{1}=\left( n,0\right) $
and for $\mu =n\omega _{2}=\left( n/2,n/2\right) $ the same algorithm gives:
\begin{equation*}
\left( L^{\mu }\otimes L^{vect}\right) _{\downarrow \frak{g}_{\mathrm{diag}%
}}=L^{\left( \mu _{1}+1,0\right) }\oplus L^{\left( \mu _{1}-1,0\right)
}\oplus L^{\left( \mu _{1},\mu _{2}+1\right) },
\end{equation*}

\begin{equation*}
\left( L^{\mu }\otimes L^{vect}\right) _{\downarrow \frak{g}_{\mathrm{diag}%
}}=L^{\mu }\oplus L^{\left( \mu _{1}+1,\mu _{2}\right) }\oplus L^{\left( \mu
_{1},\mu _{2}-1\right) }.
\end{equation*}
correspondingly. Q.E.D.
\end{proof}

\section{Singular elements and fans. $B_{2}$-case}

For $\frak{g}=B_{2}$ , $r=2$ the simple roots in $e$-basis are $\alpha
_{1}=e_{1}-e_{2}, \alpha _{2}=e_{2}$, the fundamental weights are $\omega
_{2}=\frac{1}{2}\left( e_{1}+e_{2}\right) ,\omega _{1}=e_{1}$ and the
fundamental modules -- $L^{\omega _{2}}$(spinor) and $\dim L^{\omega
_{2}}=4,L^{\omega _{1}}$(vector), $\dim L^{\omega _{1}}=5$.

Consider the modules $(L^{\omega _{i}})^{\otimes p}|_{p\in Z_{+},i=1,2}$ and
the decompositions $(L^{\omega _{i}})^{\otimes p}=\sum_{\nu }m_{\nu
}^{\left( i\right) p}L^{\nu }.$

Our aim is to find multiplicities $m_{\nu }^{\left( i\right) p }$ as
functions of $\nu $\ and $p$. To solve the problem we propose to use the
\textit{singular elements formalism }\cite{Feigin1986}, the polynomial
dependence property that is a consequence of the relation (\ref{N-rec
property}) and the injection fan technique \cite{IlyinKulishLyakhovsky2009}
\cite{LyakhNaz2011}.

\subsection{Constructing the fan $\Gamma _{B_{2}^{\mathrm{diag}}\rightarrow
\oplus ^{p}B_{2}}$}

Consider the injection $B_{2}^{\mathrm{diag}}\rightarrow \oplus ^{p}B_{2}$.
The fan $\Gamma _{B_{2}^{\mathrm{diag}}\rightarrow \oplus ^{p}B_{2}}\equiv
\Gamma _{p}$ is the $\left( p-1\right) $-th tensor power of the trivial
module singular element that is of $\Psi _{B_{2}}^{\left( 0\right) }$.

\begin{conjecture}
Place the origin of the space $\mathcal{L}P$ at the end of the lowest weight
vector of the fan. The structure of the fan $\Gamma _{p}$ is as follows:

\begin{enumerate}
\item  Along the line $p\alpha _{1}$ in the $k$-th root lattice point the
multiplicity is $\left( -1\right) ^{k}C_{p}^{k-1}$; $k=1,\ldots ,p+1.$

\item  Each weight with the coordinates $\left( k-1,1-k\right) $ is an
origin of the set $S_{\left( k-1,1-k\right) }$ of singular weights described
below.

\item  The set $S_{\left( k-1,1-k\right) }$ is composed of the tensor
product of the singular elements $\Psi _{x}^{\left[ 0\right] <}$\ and $\Psi
_{y}^{\left[ v\right] <}$of the trivial and vector lowest weight modules of
the algebras $A_{1}$ with the roots $x$ and $y$ correspondingly. The list of
modules for the set $S_{\left( k-1,1-k\right) }$ is completely defined by
its coordinates $\left( k-1,1-k\right) $:
\begin{equation*}
S_{\left( k-1,1-k\right) }=\left( \Psi _{e_{2}}^{\left[ v\right] <}\right)
^{\otimes \left( k-1\right) }\otimes \left( \Psi _{e_{1}}^{\left[ 0\right]
<}\right) ^{\otimes \left( k-1\right) }\otimes \left( \Psi _{e_{1}}^{\left[ v%
\right] <}\right) ^{\otimes \left( p-k+1\right) }\otimes \left( \Psi
_{e_{2}}^{\left[ 0\right] <}\right) ^{\otimes \left( p-k+1\right) }.
\end{equation*}
\end{enumerate}
\end{conjecture}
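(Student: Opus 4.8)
The plan is to work inside the group algebra $\mathcal{E}$ and to pin down $\Gamma_p$ as the single element $R^{p-1}=\left(\Psi_{B_2}^{(0)}\right)^{p-1}$. This is forced by the fan relation of Lemma 2: for the injection $\frak{g}_{\mathrm{diag}}\rightarrow\oplus^{p}\frak{g}$ every root $\alpha\in\Delta^{+}$ has multiplicity $p$ in $\oplus^{p}\frak{g}$ and multiplicity $1$ in the diagonal, so $\sum_{\gamma\in\Gamma_p}s(\gamma)e^{-\gamma}=1-\prod_{\alpha\in\Delta^{+}}(1-e^{-\alpha})^{p-1}=1-R^{p-1}$. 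First I would record the four $B_2$ factors $R=(1-e^{-(e_1-e_2)})(1-e^{-e_2})(1-e^{-e_1})(1-e^{-(e_1+e_2)})$ and pass to the shifted picture, with the origin at the lowest weight of the fan, where $R$ becomes the polynomial $\hat{R}=\prod_{\alpha\in\Delta^{+}}(1-e^{\alpha})$ with lowest term $1$. Each of its factors $1-e^{\alpha}$ is the rank-one lowest-weight trivial element $\Psi_{\alpha}^{[0]<}$, which is the bridge to the $A_1$ data in items 2 and 3.

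For item 1 I would read off the coefficients of $\hat{R}^{p-1}$ along the $\alpha_1=e_1-e_2$ axis by the substitution $e^{e_1}=zw$, $e^{e_2}=w$ and then taking the constant term in $w$. Under it the long and short factors collapse, $\hat{R}(zw,w)=(1-z)(1-w)(1-zw)(1-zw^2)$, and only the first factor is free of $w$ while the other three contain nonnegative powers of $w$ only. Raising to the power $p-1$ and extracting the $w$-constant term therefore returns $(1-z)^{p-1}$, whose coefficients $(-1)^{m}C_{p-1}^{m}$ are the alternating binomials along the axis with the signs of item 1; reconciling this with the upper index appearing in item 1 is part of the block analysis below. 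This three-line constant-term computation is the cleanest route to the axis multiplicities.

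Items 2 and 3 are the substantive part. I would reorganize the full expansion of $\hat{R}^{p-1}$ by the $\alpha_1$-coordinate of each monomial, indexing the anchor points as $(k-1,1-k)=(k-1)\alpha_1$, $k=1,\dots,p+1$. The key algebraic input is the factorization $\Psi_{x}^{[v]<}=\Psi_{x}^{[0]<}\,(1+e^{x})$ of the vector element through the trivial one: it lets the $(k-1)$ trivial and $(p-k+1)$ vector copies attached to $e_1$, and symmetrically the $(k-1)$ vector and $(p-k+1)$ trivial copies attached to $e_2$, fuse into the clean powers $(1-e^{e_1})^{p}(1-e^{e_2})^{p}$ multiplied by $(1+e^{e_1})^{p-k+1}(1+e^{e_2})^{k-1}$. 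I would then identify the transverse part of $\hat{R}^{p-1}$ at the $k$-th anchor with this product, i.e. show that the fourfold tensor product $S_{(k-1,1-k)}$ of the stated $A_1$ lowest-weight trivial and vector elements reproduces the local shape of the fan, the long-root steps $e_1\pm e_2$ being regenerated by products of the short-root factors.

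The hard part will be this matching, i.e. the generating-function identity $\hat{R}^{p-1}=\sum_{k=1}^{p+1}e^{(k-1)\alpha_1}\,S_{(k-1,1-k)}$ underlying items 2 and 3. It forces the two long-root factors $(1-e^{e_1\pm e_2})^{p-1}$ to be completely absorbed into short-root vector/trivial combinations and demands that cross-anchor contributions cancel; it is also where the passage from the raw axis power $p-1$ to the upper index of item 1 is accounted for, the extra factor coming precisely from the $(1+e^{x})$ in $\Psi_{x}^{[v]<}=\Psi_{x}^{[0]<}(1+e^{x})$. I would prove it by induction on $p$ using $\hat{R}^{p}=\hat{R}\cdot\hat{R}^{p-1}$, checking that multiplication by the single denominator $\hat{R}$ shifts $k$ and rebalances the trivial/vector exponents as prescribed, the binomial bookkeeping of the $(1+e^{e_i})$ factors being the Pascal recursion behind item 1. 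Finally I would treat the two degenerate anchors $k=1$ and $k=p+1$, where one pair of tensor factors is empty, matching them against the boundary decompositions of $\left(L^{\mu}\otimes L^{vect}\right)_{\downarrow\frak{g}_{\mathrm{diag}}}$ obtained earlier on the walls of $\overline{C^{(0)}}$.
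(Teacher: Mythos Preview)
Your core strategy---prove the block decomposition of $\hat R^{\,p-1}$ by induction on $p$ via $\hat R^{\,p}=\hat R\cdot\hat R^{\,p-1}$, with the Pascal identity $C_{p}^{k-1}+C_{p}^{k-2}=C_{p+1}^{k-1}$ tracking the block multiplicities---is exactly the paper's proof. The paper simply assumes the stated structure for $\Gamma_p$, multiplies by $\Psi^{(0)}$, observes that each $S_{(k-1,1-k)}$ acquires one more $\Psi_{e_1}^{[v]<}\otimes\Psi_{e_2}^{[0]<}$ factor while the neighbour $S_{(k-2,2-k)}$ shifts up by $\alpha_1$ and acquires one more $\Psi_{e_2}^{[v]<}\otimes\Psi_{e_1}^{[0]<}$, and lets Pascal combine the two contributions; the endpoints $k=1$ and $k=p+1$ are handled by writing out the two extremal blocks explicitly, with no appeal to the $L^{\mu}\otimes L^{vect}$ wall decompositions.

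Two remarks on your detours. First, your constant-term computation for item~1 is aimed at the wrong target: the numbers $(-1)^{k}C_{p}^{k-1}$ are not the coefficients of $\hat R^{\,p-1}$ at the lattice points $(k-1)\alpha_1$ (those are indeed $(-1)^{k-1}C_{p-1}^{k-1}$, as you found), but the \emph{multiplicities attached to the blocks} $S_{(k-1,1-k)}$ in the decomposition---the blocks overlap along the $\alpha_1$-axis, so the raw axis coefficients and the block weights differ. The paper never separates item~1 from items~2--3; it treats the $(-1)^{k}C_{p}^{k-1}$ purely as a block label and verifies it inside the same induction. Second, the factorisation $\Psi_x^{[v]<}=\Psi_x^{[0]<}(1+e^{x})$ and the accompanying ``absorption'' of the long-root factors $(1-e^{e_1\pm e_2})^{p-1}$ into short-root combinations is more machinery than the paper needs: since it argues block-by-block, it never has to reassemble the long roots at all. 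Your induction will work, but you can drop the constant-term step and the boundary matching against Conjecture~4 and arrive at the paper's argument verbatim.
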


\begin{proof}
Let $\Gamma _{p}$ be the fan with the properties described above. Remember
that the set $S_{\left( k-1,1-k\right) }$ has itself the multiplicity $%
\left( -1\right) ^{k}C_{p}^{k-1}$. Multiply the fan $\Gamma _{p}$ by the
element $\Psi ^{\left( 0\right) }$, i.e. pass to the power $\left(
p+1\right) $. This means that the set $S_{\left( k-1,1-k\right) }$ will be
transformed to
\begin{equation*}
\left( -1\right) ^{k}C_{p}^{k-1}\left( \Psi _{e_{2}}^{\left[ v\right]
<}\right) ^{\otimes \left( k-1\right) }\otimes \left( \Psi _{e_{1}}^{\left[ 0%
\right] <}\right) ^{\otimes \left( k-1\right) }\otimes \left( \Psi _{e_{1}}^{%
\left[ v\right] <}\right) ^{\otimes \left( p-k+2\right) }\otimes \left( \Psi
_{e_{2}}^{\left[ 0\right] <}\right) ^{\otimes \left( p-k+2\right) }.
\end{equation*}
The set $S_{\left( k-2,2-k\right) }$ will become $S_{\left( k-1,1-k\right) }$
with the multiplicity $\left( -1\right) ^{k+1}C_{p}^{k-2}$. According to the
Pascal triangle property,
\begin{equation*}
\left( -1\right) ^{k}C_{p}^{k-1}+\left( -1\right) ^{k+1}C_{p}^{k-2}=\left(
-1\right) ^{k}C_{p+1}^{k-1},
\end{equation*}
the first set $S_{\left( 0,0\right) }$ takes the form
\begin{equation*}
\left( \Psi _{e_{1}}^{\left[ v\right] <}\right) ^{\otimes \left( p+1\right)
}\otimes \left( \Psi _{e_{2}}^{\left[ 0\right] <}\right) ^{\otimes \left(
p+1\right) },
\end{equation*}
while the last one becomes $S_{\left( p,p\right) }$,
\begin{equation*}
\left( -1\right) ^{p+2}\left( \Psi _{e_{2}}^{\left[ v\right] <}\right)
^{\otimes \left( p+1\right) }\otimes \left( \Psi _{e_{1}}^{\left[ 0\right]
<}\right) ^{\otimes \left( p+1\right) }.
\end{equation*}
Thus the structure of this product coinsides with the previously defined fan
$\Gamma _{q}$ with $q=p+1$. Q.E.D.
\end{proof}

An explicit expression for the fan $\Gamma _{p}$ is obtained by substituting
$\Psi _{e_{i}}^{\left[ 0\right] <}$ and $\Psi _{e_{i}}^{\left[ v\right] <}$
(as formal algebra elements) by their expressions in terms of a function
\begin{equation*}
\widehat{C}_{j}^{i}=\left\{
\begin{array}{c}
C_{j}^{i}\text{ \ for }0<i\leq j>0 \\
0\text{ \ \ \ \ \ \ \ \ \ \ \ \ otherwise}.
\end{array}
\right.
\end{equation*}
Finally we get
\begin{eqnarray}
\Gamma _{p} &=&\sum_{a,b}\gamma _{p}\left( a,b\right) e^{\left( a,b\right)
};\quad \left\{
\begin{array}{c}
a=k-1,\ldots ,3p-k-2, \\
b=1-k,\ldots ,p+k-2,
\end{array}
\right.  \notag \\
\gamma _{p}\left( a,b\right)
&=&\sum_{k=1}^{p}\sum_{l_{k}=1}^{k}\sum_{m_{k}=1}^{p-k+1}e^{\left(
a,b\right) }\left( -1\right) ^{k+a+b-2\left( l_{k}+m_{k}\right) }\times
\notag \\
&&\times \widehat{C}_{p-1}^{k-1}\widehat{C}_{k-1}^{l_{k}-1}\widehat{C}%
_{p-k}^{m_{k}-1}\widehat{C}_{p-k}^{b+k-3l_{k}+2}\widehat{C}%
_{k-1}^{a-k-3m_{k}+4},  \label{fan-so5}
\end{eqnarray}
Here the fan is a function of the parameter $p$ and the coordinates $\left(
a,b\right) $ of the highest weight. The zero point has the multiplicity $%
\left( -1\right) $. The result is that for any $\mu \in P$ the singular
weights diagram $\Psi \left( \sum_{\nu }m_{\nu }^{\left( i\right) p}L^{\nu
}\right) =\sum e^{\left( \mu \right) }\epsilon \left( w\left( \psi \left(
\mu \right) \right) \right) $ has the following fundamental property:
\begin{equation*}
\sum_{a,b}\gamma _{p}\left( a,b\right) \epsilon \left( w\left( \psi \left(
\mu +\left( a,b\right) \right) \right) \right) =0,
\end{equation*}
described by the fan $\Gamma _{p}$.

\subsection{Singular element for $\left( L^{\protect\omega _{2}}\right)
^{\otimes p}$ -- the spinor case}

Let us construct the singular element for the $p$-th power of the second
(spinor) fundamental module $L^{\omega _{2}}$ (coordinates of singular
weights here are half-integer and the $W$-invariant vector is $(0,0)$).

\begin{enumerate}
\item  Define the system $S_{k}$ consisting of blocks enumerated by a pair
of indices $\left( l_{k},m_{k}\right) $ where $l_{k}=1,\ldots ,k+1$ and $%
m_{k}=1,\ldots ,p-k+2$ and attached to the points $\left( \frac{p}{2}%
-k+1-4\left( m_{k}-1\right) ,\frac{p}{2}+k-1-4\left( l_{k}-1\right) \right) $%
. The multiplicities of these blocks are $\left( -1\right) ^{l_{k}+m_{k}-2}%
\widehat{C}_{k}^{l_{k}-1}\widehat{C}_{p-k+1}^{m_{k}-1}$

\item  Localize the systems $S_{k}$ along the line $(\frac{p}{2},\frac{p}{2}%
)-p\alpha _{1}$: the first system $S_{1}$ has the origin at the point $(%
\frac{p}{2},\frac{p}{2})$, the $k$-th -- at the point $\left( \frac{p}{2}%
-k+1,\frac{p}{2}+k-1\right) $, the last one, $S_{p+1}$, -- at $\left( -\frac{%
1}{2}p,\frac{3}{2}p\right) $. These systems have the multiplicities $\left(
-1\right) ^{k-1}C_{p}^{k-1}$; $k=1,\ldots ,p+1.$

\item  The numbers $\left( \frac{p}{2}-k+1-4\left( m_{k}-1\right) ,\frac{p}{2%
}+k-1-4\left( l_{k}-1\right) \right) $ are the coordinates of the upper
right corner of the $\left( l_{k},m_{k}\right) $-block. The blocks have the
structure dual to the structure of the system $S_{k}$ but the intervals in
the blocks are doubled. The weights in the block are enumerated by the
indices $\left( i_{k},j_{k}\right) $ where $j_{k}=1,\ldots ,k+1$ and $%
i_{k}=1,\ldots ,p-k+2$.

\item  Thus the $\left( l_{k},m_{k}\right) $-block in $S_{k}$ has the form:
\begin{equation*}
\sum_{i_{k}=1}^{p-k+2}\sum_{j_{k}=1}^{k+1}\left( -1\right) ^{i_{k}+j_{k}-2}%
\widehat{C}_{k}^{j_{k}-1}\widehat{C}_{p-k+1}^{i_{k}-1}e^{\left( \frac{p}{2}%
-k+1-4m_{k}-2j_{k}+7,\frac{p}{2}+k-1-4l_{k}-2i_{k}+5\right) }.
\end{equation*}

\item  Now the system $S_{k}$ can be composed:
\begin{eqnarray*}
&&\sum_{i_{k},m_{k}=1}^{p-k+2}\sum_{j_{k},l_{k}=1}^{k+1}\left( -1\right)
^{l_{k}+m_{k}+i_{k}+j_{k}-4}\times  \\
&&\times \widehat{C}_{k}^{l_{k}-1}\widehat{C}_{p-k+1}^{m_{k}-1}\widehat{C}%
_{k}^{j_{k}-1}\widehat{C}_{p-k+1}^{i_{k}-1}e^{\left( \frac{p}{2}%
-k+1-4m_{k}-2j_{k}+7,\frac{p}{2}+k-1-4l_{k}-2i_{k}+5\right) }.
\end{eqnarray*}

\item  Finally, the singular element $\Psi ^{\left( \left( \omega
_{2}\right) ^{\otimes p}\right) }\equiv \Psi ^{\left( \left( s\right)
^{\otimes p}\right) }$ is fixed as
\begin{eqnarray*}
\Psi ^{\left( \left( s\right) ^{\otimes p}\right) }
&=&\sum_{k=1}^{p+1}\sum_{i_{k},m_{k}=1}^{p-k+2}\sum_{j_{k},l_{k}=1}^{k+1}%
\left( -1\right) ^{l_{k}+m_{k}+i_{k}+j_{k}-4}\times  \\
&&\times \widehat{C}_{k}^{l_{k}-1}\widehat{C}_{p-k+1}^{m_{k}-1}\widehat{C}%
_{k}^{j_{k}-1}\widehat{C}_{p-k+1}^{i_{k}-1}\times  \\
&&\times e^{\left( \frac{p}{2}-k+1-4m_{k}-2j_{k}+7,\frac{p}{2}%
+k-1-4l_{k}-2i_{k}+5\right) }
\end{eqnarray*}
The singular multiplicities are functions of $p$ and $c,d$:
\begin{eqnarray*}
\psi ^{\left( \left( s\right) ^{\otimes p}\right) }\left( c,d\right)
&=&e^{\left( c,d\right)
}\sum_{k=1}^{p+1}\sum_{l_{k}=1}^{k}\sum_{m_{k}=1}^{p-k+2}\left( -1\right)
^{k+\frac{1}{2}\left( c-d\right) -\left( l_{k}+m_{k}\right) +1}\times  \\
&&\times \widehat{C}_{p}^{k-1}\widehat{C}_{k}^{l_{k}-1}\widehat{C}_{k}^{%
\frac{1}{2}\left( 4\left( 1-m_{k}\right) -k+c-\frac{1}{2}p+1\right) }\times
\\
&&\times \widehat{C}_{p-k+1}^{m_{k}-1}\widehat{C}_{k}^{\frac{1}{2}\left(
2-4m_{k}+k-d+\frac{1}{2}p+1\right) }.
\end{eqnarray*}
\end{enumerate}

\subsection{Singular element for $\left( L^{\protect\omega _{1}}\right)
^{\otimes p}$ -- the vector case}

The construction procedure in the vector case is analogous to that of the
spinor. It results in obtaining the expression:
\begin{eqnarray*}
\Psi ^{\left( \left( \omega _{1}\right) ^{\otimes p}\right) } &\equiv &\Psi
^{\left( \left( v\right) ^{\otimes p}\right)
}=\sum_{k=1}^{p+1}\sum_{j,n=0}^{k-1}\sum_{i,m=0}^{p-k+1}e^{\left(
2k+j+5m-4p-2,-2p+i-k+5n+1\right) }\times \\
&&\times \left( -1\right) ^{i+j+k+m+n-1}\widehat{C}_{p-k+1}^{m}\widehat{C}%
_{k-1}^{n}\widehat{C}_{p-k}^{j}\widehat{C}_{p}^{k-1}\widehat{C}_{p-k+1}^{i}.
\end{eqnarray*}
The corresponding singular multiplicities function depending on $p$ and $c,d$
are
\begin{eqnarray}
\psi ^{\left( \left( v\right) ^{\otimes p}\right) }\left( c,d\right)
&=&\sum_{k=1}^{p+1}\sum_{l_{k}=1}^{k}\sum_{m_{k}=1}^{p-k+2}e^{\left(
c,d\right) }\times  \notag \\
&&\times \left( -1\right) ^{k-d-c+p-4\left( l_{k}+m_{k}\right) +7}\times
\notag \\
&&\times \widehat{C}_{p}^{k-1}\widehat{C}_{k-1}^{l_{k}-1}\widehat{C}%
_{p-k+1}^{m_{k}-1}\widehat{C}_{p-k+1}^{-d+2k-5\left( l_{k}-1\right) -2}%
\widehat{C}_{k-1}^{p-c-2k-5\left( m_{k}-1\right) +2}.
\label{sing-el-vect-so5}
\end{eqnarray}

\subsection{Recursive procedure for the vector case}

To illustrate the recursive algorithm we perform calculations that must give
us the multiplicities $m_{\nu }^{\left( 1\right) p}$. The starting value is
always known -- this is the multiplicity of the highest weight $\nu =\left(
p,0\right) $ of $(L^{\omega _{1}})^{\otimes p}$ that equals $1$. Suppose we
have found the values of the multiplicity function $m_{\nu }^{\left(
1\right) p}$ for the 14 first weights:\\[1mm]

\begin{tabular}{||c||cccccc||}
\hline\hline
$c\setminus d$ & $-2$ & $-1$ & $0$ & $+1$ & $+2$ & $+3$ \\ \hline\hline
$p$ & $0$ & $-1$ & $+1$ & $0$ & $0$ & $0$ \\
$p-1$ & $1-p$ & $0$ & $0$ & $p-1$ & $0$ & $0$ \\
$p-2$ &  &  &  &  & $\frac{1}{2}p\left( p-3\right) $ & $0$ \\ \hline\hline
\end{tabular}\\[1mm]

Applying the fan $\Gamma _{p}$\ (\ref{fan-so5}) we find the multiplicity for
the next weight in the third line, it has the coordinates $\left(
p-2,1\right) $. The first line of the fan weights contributes the value
\begin{equation*}
\left( p-1\right) \times \frac{1}{2}p\left( p-3\right) ,
\end{equation*}
the second line --
\begin{equation*}
-\left( p-1\right) \left( p-2\right) \times \left( p-1\right)
\end{equation*}
and the third line --
\begin{eqnarray*}
&&\left( +\frac{1}{2}\left( p-1\right) \left( p-2\right) \left( p-3\right)
\right) \times \left( +1\right) + \\
&&+\left( \left( p-1\right) -\frac{1}{2}\left( p-1\right) \left( p-2\right)
\right) \times \left( -1\right)
\end{eqnarray*}
Now we must calculate the singular element contribution -- the value of the
singular weights function $\psi ^{\left( \left( v\right) ^{\otimes p}\right)
}\left( p-2,1\right) $ for the weight $\left( p-2,1\right) $. The ''singular
contribution'' is
\begin{equation*}
\psi ^{\left( \left( v\right) ^{\otimes p}\right) }\left( p-2,1\right)
=p\left( p-1\right) .
\end{equation*}
The multiplicity $m_{\left( p-2,1\right) }^{\left( 1\right) p}$ is the sum:
\begin{eqnarray*}
m_{\left( p-2,1\right) }^{\left( 1\right) p} &=&\left( p-1\right) \left(
\begin{array}{c}
\frac{1}{2}p\left( p-3\right) -\left( p-1\right) \left( p-2\right) +p \\
+\frac{1}{2}\left( p-2\right) +\frac{1}{2}\left( p-2\right) \left(
p-3\right) -1
\end{array}
\right) \\
&=&\frac{1}{2}\left( p-1\right) \left( p-2\right) .
\end{eqnarray*}

Notice that here we consider the line $\nu =\left( p-2,1\right) $ in the
space $P\times \mathbf{R}^{1}$. As a result we obtain polynomials
characterizing the $p$-dependence of the multiplicity for a fixed distance
between the highest weight and the weight $\nu $.

This example shows that the tools elaborated above (the injection fan and
singular elements) are effective in solving the reduction problem for tensor
power modules $(L^{\omega _{i}})^{\otimes p}$.

\section{Alternative approach.}

According to Lemma 3 the multiplicity coefficients have additional
recurrence properties generated by the fundamental module weights system $%
N\left( L^{\left( \omega _{i}\right) }\right) $ :

\begin{equation}
\sum_{\mu \in P^{++}}m_{\mu }^{\left( i\right) p}\Psi ^{\left( \mu
\right) }=\mathrm{ch}\left( L^{\left( \omega _{i}\right) }\right)
\Psi ^{\left( \otimes ^{\left( p-1\right) }\omega _{i}\right) }.
\label{N-recurrence-fund}
\end{equation}
This relation can be decomposed using the multiplicity functions (defined on
$P$),
\begin{equation}
\sum_{\mu \in P}M^{\omega _{i}}\left( \mu ,p\right) e^{\mu
}=\mathrm{ch}\left( L^{\left( \omega _{i}\right) }\right) \Psi
^{\left( \otimes ^{\left( p-1\right) }\omega _{i}\right) },
\end{equation}
remember that $M^{\omega _{i}}\left( \mu ,p\right) \mid _{\mu \in \overline{%
C^{\left( 0\right) }}}=m_{\mu }^{\left( i\right) p}$ . Thus instead of the
highest weight search for each singular element $\Psi ^{\left( \mu \right) }$
we use their anti-symmetry properties. This leads to the recurrent relation:
\begin{equation}
M^{\omega _{i}}\left( \mu ,p\right) =\sum_{\zeta \in N\left( L^{\left(
\omega _{i}\right) }\right) }n_{\zeta }\left( L^{\left( \omega _{i}\right)
}\right) M^{\omega _{i}}\left( \mu -\zeta ,p-1\right) ,
\label{N-rec-rel-omega-i}
\end{equation}
where $n_{\zeta }\left( L^{\left( \omega _{i}\right) }\right) =\mathrm{mult}%
_{L^{\left( \omega _{i}\right) }}\left( \zeta \right) $. Obviously such
relations are especially useful when $\dim \left( L^{\left( \omega
_{i}\right) }\right) $ is small, thus for our needs (when the module $%
L^{\left( \omega _{i}\right) }$ is fundamental with the trivial weights
multiplicities $n_{\zeta }\left( L^{\left( \omega _{i}\right) }\right) =1$)
the obtained recurrence must be effective.

Formula (\ref{N-rec-rel-omega-i}) tells us what happens when we pass from
the $(p-1)$-th power to the $p$-th. In particular for the spinor module $%
L^{\omega _{2}}$ to find the value of $M^{\omega _{2}}\left( \mu
,p\right) $ the coordinates must be shifted by the vectors of
$\mathcal{N}\left( L^{\left( \omega _{2}\right) }\right) $
diagram:
\begin{equation}
M^{\omega _{2}}\left( \mu ,p\right) =\sum_{\zeta =N\left( L^{\left( \omega
_{2}\right) }\right) }M^{\omega _{2}}\left( \mu -\zeta ,p-1\right) .
\label{main rec rel}
\end{equation}
In the recurrence starting point the value of the multiplicity function is
known $M^{\omega _{2}}\left( p\omega _{2},p\right) =1$ and for all $\nu
>p\omega _{2}$ it has zero values. In the natural coordinates this means:
\begin{equation*}
M^{\omega _{2}}\left( \left( a,b\right) ,p\right) =\sum_{\lambda =\left(
a,b\right) -\left\{ \left( \frac{1}{2},\frac{1}{2}\right) \left( -\frac{1}{2}%
,\frac{1}{2}\right) \left( \frac{1}{2},-\frac{1}{2}\right) \left( -\frac{1}{2%
},-\frac{1}{2}\right) \right\} }M^{\omega _{2}}\left( \lambda ,p-1\right) .
\end{equation*}

For the vector module $L^{\omega _{1}}$ and its tensor powers we can
construct similar relations:
\begin{eqnarray}
M^{\omega _{1}}\left( \left( a,b\right) ,p\right) &=&\sum_{\zeta =N\left(
L^{\left( \omega _{1}\right) }\right) }M^{\omega _{1}}\left( \left(
a,b\right) -\zeta ,p-1\right) =  \notag \\
&&\sum_{\lambda =\left( a,b\right) -\left\{ \left( 1,0\right) \left(
0,1\right) \left( -1,0\right) \left( 0,-1\right) ,\left( 0,0\right) \right\}
}M^{\omega _{1}}\left( \lambda ,p-1\right) ,
\end{eqnarray}
with the similar boundary condition
\begin{equation}
M^{\omega _{1}}\left( \left( p,0\right) ,p\right) =1.
\end{equation}

The obtained recurrence relations indicate an important property of $%
M^{\omega _{i}}\left( \mu ,p\right) $:

\begin{conjecture}
The multiplicity function $M^{\omega _{i}}\left( \mu ,p\right) $
is a polynomial on $p$ over $\mathbf{Q}$ (rational numbers).
\end{conjecture}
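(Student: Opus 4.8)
The plan is to prove the statement by induction, working from the recurrence relation (\ref{N-rec-rel-omega-i}) rather than from the closed formulas, since that recurrence is uniform in the fundamental index $i$ and (the modules being fundamental) has all weight multiplicities $n_{\zeta}=1$. The key move is to pass to offset coordinates measured from the moving highest weight $p\omega_{i}$: for a fixed lattice vector $\delta$ set $G(\delta,p):=M^{\omega_{i}}(p\omega_{i}-\delta,p)$, so that the regime ``fixed distance from the highest weight'' (in which the example of \S5.4 produced polynomials) becomes simply ``fixed $\delta$''. Substituting $\mu=p\omega_{i}-\delta$ into (\ref{N-rec-rel-omega-i}) and using the identity $(p-1)\omega_{i}-(\mu-\zeta)=\delta-(\omega_{i}-\zeta)$ turns the recurrence into
\[
G(\delta,p)=\sum_{\zeta\in N(L^{\omega_{i}})}G\bigl(\delta-(\omega_{i}-\zeta),\,p-1\bigr).
\]

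First I would isolate the only shift that preserves $\delta$, namely the highest weight $\zeta=\omega_{i}$ of the fundamental module, for which $\omega_{i}-\zeta=0$. Every other weight $\zeta$ of $L^{\omega_{i}}$ satisfies $\omega_{i}-\zeta>0$, i.e. $\omega_{i}-\zeta$ is a nonzero nonnegative combination of simple roots, so $\mathrm{ht}(\omega_{i}-\zeta)\ge 1$. Introducing the height $h(\delta):=\mathrm{ht}(\delta)$ (well defined because the relevant offsets lie in the root lattice, and $h(\delta)\ge 0$ since every exponent of the singular element is $\le p\omega_{i}$ in the dominance order), the recurrence splits as
\[
G(\delta,p)-G(\delta,p-1)=\sum_{\zeta\ne\omega_{i}}G\bigl(\delta-(\omega_{i}-\zeta),\,p-1\bigr),
\]
whose right-hand side involves only offsets of strictly smaller height $h(\delta)-\mathrm{ht}(\omega_{i}-\zeta)<h(\delta)$.

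This sets up an induction on $h(\delta)$. The base case $h(\delta)=0$ forces $\delta=0$, and the boundary condition gives $G(0,p)=M^{\omega_{i}}(p\omega_{i},p)=1$, a constant polynomial. For the inductive step I would assume $G(\delta',\cdot)$ is a polynomial in the power for every $\delta'$ with $h(\delta')<h(\delta)$; then the forcing term $F(\delta,q):=\sum_{\zeta\ne\omega_{i}}G(\delta-(\omega_{i}-\zeta),q)$ is a polynomial in $q$, and telescoping the first-order difference from the threshold $p_{\ast}(\delta)$ below which $G$ vanishes yields $G(\delta,p)=\sum_{q=p_{\ast}+1}^{p}F(\delta,q-1)$. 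The last ingredient is that a sum $\sum_{q=1}^{p}F(q)$ of a polynomial $F$ is again a polynomial in $p$ (Faulhaber's identities, i.e. the calculus of finite differences), so $G(\delta,p)$ is a polynomial in $p$ over $\mathbf{Q}$; restricting to those $\delta$ for which $p\omega_{i}-\delta$ is dominant recovers the genuine multiplicities. The vector case $i=1$ and the spinor case $i=2$ are covered by the same argument, the only difference being the explicit shift set.

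I expect the main obstacle to be bookkeeping rather than conceptual. One must (a) verify that the support of the full multiplicity function on $P$ is confined to $h(\delta)\ge 0$ so the induction is grounded, which uses that each Weyl-reflected term $w\circ\mu-\rho$ of the singular element stays $\le p\omega_{i}$ in the dominance order, and (b) track how the polynomial degree grows with $h(\delta)$, since each summation step raises the degree by one. As an independent check one can read the polynomiality directly off the closed expression (\ref{sing-el-vect-so5}): fixing the offset makes the argument $p-c$ of the binomial $\widehat{C}_{k-1}^{p-c-2k-5(m_{k}-1)+2}$ constant, which confines $k,l_{k},m_{k}$ to a $p$-independent range, leaving a fixed finite sum of products of binomials $\binom{p+\text{const}}{\text{const}}$, each a polynomial in $p$, multiplied by a sign whose exponent is itself $p$-independent.
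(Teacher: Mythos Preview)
The paper does not actually prove this statement. Conjecture~6 is stated immediately after the recurrence~(\ref{N-rec-rel-omega-i}) with only the remark that the recurrence relations ``indicate'' this property, and the text then moves on to Conjecture~7 without any argument. So there is no proof in the paper to compare your attempt against; you are supplying what the authors left open.

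Your approach is the natural one and is essentially correct: passing to the offset $\delta=p\omega_i-\mu$, isolating the unique weight $\zeta=\omega_i$ of $L^{\omega_i}$ with $\omega_i-\zeta=0$, and inducting on $\mathrm{ht}(\delta)$ via discrete antidifferentiation is exactly how one turns a recurrence of the form $G(\delta,p)-G(\delta,p-1)=(\text{lower-height terms})$ into a polynomiality statement. One small correction: your ``threshold $p_\ast(\delta)$ below which $G$ vanishes'' need not exist. For instance $G(\alpha_1,0)=M^{\omega_i}(-\alpha_1,0)$ is the coefficient of $e^{-\alpha_1}$ in $\Psi^{(0)}$, which equals $-1$, not $0$. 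This does not damage the argument: simply telescope from $p=0$,
\[
G(\delta,p)=G(\delta,0)+\sum_{q=1}^{p}F(\delta,q-1),
\]
and note that $G(\delta,0)$ is a constant (explicitly, $\epsilon(w)$ if $\delta=\rho-w\rho$ for some $w\in W$ and $0$ otherwise). The Faulhaber step then gives the polynomial as before. Your points (a) and (b) are both fine: the support bound $\delta\in Q^{+}$ follows from $w(\xi+\rho)-\rho\le\xi\le p\omega_i$ for every $\xi$ occurring in the decomposition, and the degree indeed increases by one at each height step, matching the degrees visible in the tables of Section~6.
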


Notice that when $\mu $ belongs to the correlated (different!) boundaries of
the area where the function is nontrivial the values $M^{\omega _{i}}\left(
\mu ,p\right) $ for $i=1$ and $i=2$ coinside.

\begin{conjecture}
The multiplicities $M^{\omega _{1}}\left( \mu ,p\right) $ of the ''upper
diagonal'' highest weights ($\mu =\left( p,0\right) -n\alpha _{1}$) for $%
(L^{\omega _{1}})^{\otimes p}$ coinside with the multiplicities $M^{\omega
_{2}}\left( \mu ,p\right) $ of the ''upper line'' highest weights ($\mu
=\left( \frac{p}{2},\frac{p}{2}\right) -n\alpha _{2}$) for $(L^{\omega
_{2}})^{\otimes p}$.
\end{conjecture}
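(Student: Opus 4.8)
The plan is to reduce the statement to a coincidence of two numerical sequences and to show that both obey the same Pascal-type recursion with the same boundary data. Put $A_n^{(p)}:=M^{\omega_1}\!\left((p,0)-n\alpha_1,\,p\right)$ for the vector ``upper diagonal'' and $B_n^{(p)}:=M^{\omega_2}\!\left((\tfrac{p}{2},\tfrac{p}{2})-n\alpha_2,\,p\right)$ for the spinor ``upper line'', both regarded as values of the (antisymmetrized) multiplicity functions and hence defined for every $n\ge 0$. In the $e$-basis these are $A_n^{(p)}=M^{\omega_1}((p-n,n),p)$ and $B_n^{(p)}=M^{\omega_2}((\tfrac{p}{2},\tfrac{p}{2}-n),p)$; both weights are dominant exactly for $0\le n\le p/2$, and there $M^{\omega_i}$ returns the genuine multiplicity. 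The assertion is precisely $A_n^{(p)}=B_n^{(p)}$ on this common range, so it suffices to prove $A_n^{(p)}=B_n^{(p)}$ as sequences.

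First I would feed these two families into the recurrences already established, namely (\ref{main rec rel}) for the spinor and its vector analogue (\ref{N-rec-rel-omega-i}), and sort the shifts $\zeta\in N(L^{\omega_i})$ according to whether $\mu-\zeta$ stays on the respective boundary line at level $p-1$. In the vector case exactly two of the five shifts do so: $\zeta=(1,0)$ returns $A_n^{(p-1)}$ and $\zeta=(0,1)$ returns $A_{n-1}^{(p-1)}$. In the spinor case exactly two of the four shifts do so: $\zeta=(\tfrac12,\tfrac12)$ returns $B_n^{(p-1)}$ and $\zeta=(\tfrac12,-\tfrac12)$ returns $B_{n-1}^{(p-1)}$. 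The remaining shifts must be shown to contribute zero. Each off-line weight falls into one of two types: either it is dominant but lies strictly above $p\omega_i$ in the root order — the shifts raising $v_1+v_2$ above $p-1$ in the vector case, or raising the first coordinate to $\tfrac{p+1}{2}$ in the spinor case — whence the genuine multiplicity and so $M^{\omega_i}$ vanish; or it is non-dominant but its $\rho$-shift $\mu-\zeta+\rho$ sits on a Weyl wall (for $B_2$ one of $v_1=v_2,\ v_1=-v_2,\ v_1=0,\ v_2=0$), whence $M^{\omega_i}$ vanishes by the antisymmetry of the singular element. Granting this, both sequences satisfy the identical recursion
\begin{equation*}
X_n^{(p)}=X_n^{(p-1)}+X_{n-1}^{(p-1)},\qquad 0\le n\le \lfloor p/2\rfloor .
\end{equation*}

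It remains to match the boundary data and to induct on $p$. The $n=0$ edge is the highest weight, so $A_0^{(p)}=B_0^{(p)}=1$, and the base $p=0,1$ reduces to $A_0^{(0)}=B_0^{(0)}=1$. At the top of the line, $n=\lfloor p/2\rfloor$, the term $X_n^{(p-1)}$ falls just outside the dominant range, but in both cases the corresponding $\rho$-shifted weight lies on the wall $v_1=v_2$ (vector), respectively $v_2=0$ (spinor), so that term is $0$ for $A$ and for $B$ alike and the recursion still closes. Induction on $p$ then yields $A_n^{(p)}=B_n^{(p)}$ throughout; as a check one reads off $A_1^{(p)}=B_1^{(p)}=p-1$ and $A_2^{(p)}=B_2^{(p)}=\tfrac12 p(p-3)$, matching the tabulated values. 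I expect the main obstacle to be the second step: certifying that every off-line shift vanishes over the full range $0\le n\le p/2$, since near the ends of the lines a shifted weight leaves the dominant chamber and one must invoke the wall-vanishing of the antisymmetrized multiplicity function rather than the naive ``above the highest weight'' argument. This bookkeeping with the $B_2$ roots and reflection walls is the technical heart of the proof.
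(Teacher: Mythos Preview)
The paper does not actually supply a proof of this statement: it is stated as Conjecture~7 and is then \emph{used} in Section~7 (``This description is based on the fact that the Conjecture~7 gives us an explicit answer to the `first diagonal' of multiplicities''), but nowhere in the text is an argument given. So there is nothing to compare your proposal against; you are providing a proof where the authors left a gap.

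Your strategy is sound and carries through. Both boundary families are fed into the respective $N$-recursions (\ref{N-rec-rel-omega-i}) and (\ref{main rec rel}); in each case precisely two of the shifts land back on the boundary line at level $p-1$, and the remaining shifts are killed either because the shifted dominant weight cannot occur in $(L^{\omega_i})^{\otimes(p-1)}$ or because the $\rho$-translate lands on a reflection wall, so the antisymmetrized $M^{\omega_i}$ vanishes. The resulting Pascal recursion $X_n^{(p)}=X_n^{(p-1)}+X_{n-1}^{(p-1)}$ together with the common seed $X_0^{(p)}=1$ forces $A_n^{(p)}=B_n^{(p)}$ by induction on $p$. The case analysis you flag as the ``main obstacle'' is exactly right and is straightforward to complete: for the vector line the off-line shifts $(-1,0)$, $(0,-1)$, $(0,0)$ all produce weights with $v_1+v_2\ge p>p-1$, and only $(0,-1)$ at $n=p/2$ leaves the dominant chamber, where $\mu+\rho=(\tfrac{p}{2}+\tfrac{3}{2},\tfrac{p}{2}+\tfrac{3}{2})$ hits the wall $v_1=v_2$; for the spinor line the off-line shifts $(-\tfrac12,\pm\tfrac12)$ both raise $v_1$ to $\tfrac{p+1}{2}$, and only $(-\tfrac12,\tfrac12)$ at $n=p/2$ leaves the chamber, with $\mu+\rho$ on the wall $v_2=0$.

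One phrasing to tighten: the off-line dominant weights are not literally ``above $(p-1)\omega_i$ in the root order'' for general $n$ (e.g.\ $(p-2,2)-(p-1,0)=(-1,2)$ is not a nonnegative combination of $\alpha_1,\alpha_2$). What you actually use, and what is correct, is the parenthetical: these weights violate the linear bound $v_1+v_2\le p-1$ (vector) or $v_1\le\tfrac{p-1}{2}$ (spinor) satisfied by every weight of the $(p-1)$-st tensor power, hence cannot be highest weights there. Replace ``root order'' by that bound and the argument is clean.
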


On the left boundary of the Weyl chamber $\overline{C^{\left( 0\right) }}$
the multiplicities $M^{\omega _{1}}\left( \mu ,p\right) $ are subject to the
presence of the ''reflected'' singular weights in the left adjacent Weyl
chamber. This observation is important because for an analogous boundary in
the spinor case the situation is different: the adjacent Weyl chamber had no
influence on the values of $M^{\omega _{2}}\left( \mu ,p\right) $ with $\mu
\in \overline{C^{\left( 0\right) }}$ (on the corresponding subdiagonal the
function has zero values). In particular this results in the following
property.

\begin{conjecture}
The ''second diagonal'' of the highest weights for $(L^{\omega
_{1}})^{\otimes p}$ starts with zero: $M^{\omega _{1}}\left( \left(
p-1\right) \omega _{1},p\right) =0.$
\end{conjecture}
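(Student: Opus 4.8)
The plan is to prove the identity by induction on $p$, using the vector-module recurrence (\ref{N-rec-rel-omega-i}) together with the Weyl anti-symmetry of the multiplicity function. I write $\mu=(p-1)\omega_1=(p-1,0)$ in the $e$-basis and set $f(p):=M^{\omega_1}\!\left((p-1,0),p\right)$; the goal is $f(p)=0$ for all $p\ge 1$.

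First I would specialise the recurrence to the weight $(p-1,0)$. Since the vector module $L^{\omega_1}$ has the five weights $\{(\pm1,0),(0,\pm1),(0,0)\}$, each of multiplicity one, relation (\ref{N-rec-rel-omega-i}) gives
\[
f(p)=M^{\omega_1}\!\left((p-2,0),p-1\right)+M^{\omega_1}\!\left((p-1,-1),p-1\right)+M^{\omega_1}\!\left((p,0),p-1\right)+M^{\omega_1}\!\left((p-1,1),p-1\right)+M^{\omega_1}\!\left((p-1,0),p-1\right).
\]
I would then classify the five level-$(p-1)$ terms. The highest weight of $(L^{\omega_1})^{\otimes(p-1)}$ is $(p-1,0)$, so the two weights $(p,0)=(p-1,0)+\omega_1$ and $(p-1,1)=(p-1,0)+\alpha_2$ lie strictly above it and contribute $0$; the term $M^{\omega_1}((p-1,0),p-1)$ is the highest-weight multiplicity, equal to $1$; and $M^{\omega_1}((p-2,0),p-1)=f(p-1)$ is precisely the quantity of the same kind one power lower.

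The only remaining term, $M^{\omega_1}((p-1,-1),p-1)$, sits outside the dominant chamber, and evaluating it is the crux of the argument. Here I would invoke the anti-symmetry of $M^{\omega_1}$: recall that the singular element of the $p$-th power, $\sum_{\mu}M^{\omega_1}(\mu,p)\,e^{\mu}$, equals by Lemma 3 the $\mathbf{Z}$-combination $\sum_{\xi\in P^{++}}m_{\xi}\,\Psi^{\xi}$ of irreducible singular elements. Since each $\Psi^{\xi}$ satisfies $w\!\cdot\!\Psi^{\xi}=\epsilon(w)\,\Psi^{\xi}$ under the shifted action $e^{\nu}\mapsto e^{w(\nu+\rho)-\rho}$, one gets $M^{\omega_1}\!\left(w(\mu+\rho)-\rho,\,p\right)=\epsilon(w)\,M^{\omega_1}(\mu,p)$ for every $w\in W$. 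Taking $w=s_{\alpha_2}$, the reflection in the simple root $\alpha_2=e_2$, and $\mu=(p-1,0)$, and using $\rho=(\tfrac32,\tfrac12)$, one computes $s_{\alpha_2}(\mu+\rho)-\rho=(p-1,-1)$, whence $M^{\omega_1}((p-1,-1),p-1)=\epsilon(s_{\alpha_2})\,M^{\omega_1}((p-1,0),p-1)=-1$. This is exactly the ``reflected'' singular weight from the adjacent chamber noted before the statement.

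Substituting the five values yields $f(p)=f(p-1)-1+0+0+1=f(p-1)$. For the base case I would take $p=1$: there $(p-1,0)=(0,0)$ and $M^{\omega_1}((0,0),1)$ is the multiplicity of the trivial module in the irreducible vector module $L^{\omega_1}$, which is $0$. Hence $f(p)=f(1)=0$ for all $p\ge1$, which is the claim. The main obstacle is the careful bookkeeping of the non-dominant term $(p-1,-1)$ through the shifted Weyl reflection; once the anti-symmetry $M^{\omega_1}\!\left(w(\mu+\rho)-\rho,p\right)=\epsilon(w)M^{\omega_1}(\mu,p)$ is established, the cancellation of the $\pm1$ contributions is immediate and the induction closes.
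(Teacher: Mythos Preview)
The paper does not actually supply a proof of this statement: it is labeled Conjecture and is simply asserted after the remark about ``reflected'' singular weights in the adjacent chamber. So there is no proof in the paper to compare against.

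Your argument, however, is a correct and complete proof. The recurrence at the weight $(p-1,0)$ is applied correctly; the two terms $M^{\omega_1}((p,0),p-1)$ and $M^{\omega_1}((p-1,1),p-1)$ vanish because both weights are strictly dominant and strictly above the highest weight $(p-1,0)$ of the $(p-1)$-th power; the highest-weight term contributes $+1$; and your key step---using the $W$-anti-invariance $M^{\omega_1}(w(\mu+\rho)-\rho,p)=\epsilon(w)M^{\omega_1}(\mu,p)$ with $w=s_{\alpha_2}$ to identify $M^{\omega_1}((p-1,-1),p-1)=-1$---is exactly the ``reflected singular weight'' phenomenon the paper alludes to in the paragraph preceding the conjecture. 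The resulting identity $f(p)=f(p-1)$ together with the base case $f(1)=m_{(0,0)}^{(1)1}=0$ finishes the induction. In effect you have turned the authors' heuristic observation into a rigorous one-line recursion; this is precisely the mechanism they had in mind but did not write out.
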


\section{Solutions for recurrence relations}

We have found out that the multiplicity functions $M^{\omega _{i}}\left( \mu
,p\right) $ are subject to an infinite system of coupled algebraic equations
with simple and obvious boundary conditions. They can be solved step by step.

For example consider the Bratteli-like diagram for $B_{2}$ vector module $%
L^{\omega _{1}}$ and let $p=1,\ldots ,6$. The maximal number of paths that
connect a point in the $p-1$ slice with the points in the $p$-th one is five.

\begin{figure}[tbh]
\noindent\centering{\ \includegraphics[width=130mm]{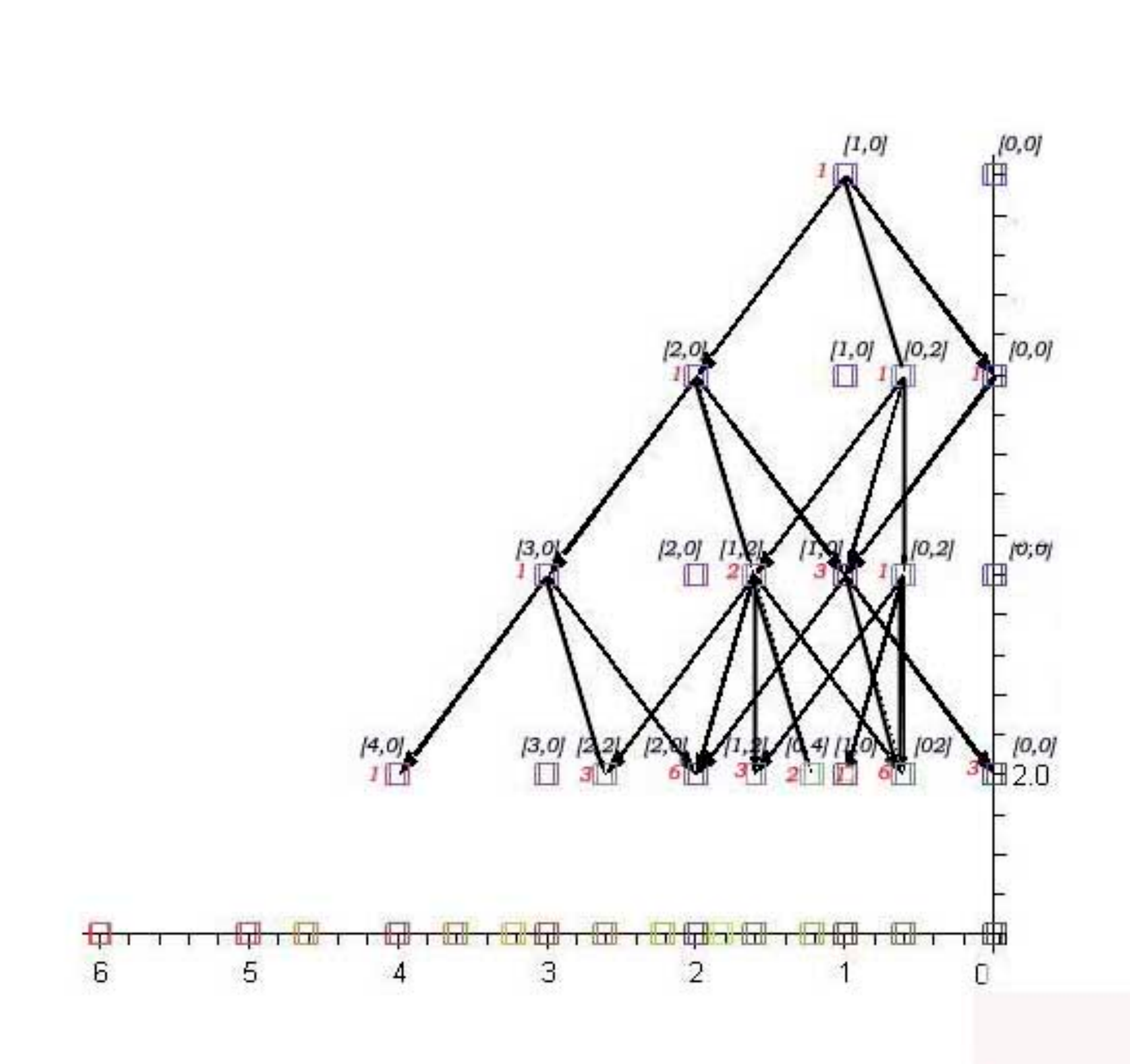} }
\caption{Bratteli-like diagram for $B_{2}$ vector module $L^{\protect\omega
_{1}}$ for $p=1,\ldots ,4$. The maximal number of paths that connect the $p-1
$ slice with the $p$-th one is five. }
\label{bratteli-3}
\end{figure}
Notice that the path counting procedure here is very complicated because of
the boundary effects. Such complexities grow up considerably if we try to
apply that counting procedures to algebras with higher rank. This fact
stimulates special interest to direct studies of the recurrence relations
systems. Moreover if the corresponding equations could be solved this will
give an explicit $p$-dependence of the multiplicity function -- the result
that scaresly could be achieved by combinatorial methods.

We can construct the solution for the recurrence equations successively and
the answer is limited only by the number of equations in the system solved .
This gives the explicit multiplicity dependence on $p$ but for a finite
number of successive weights. Thus having solved the first five equations
for the spinor case we get the following table of functions $M^{\omega
_{2}}\left( \left( a,b\right) ,p\right) $ (here the coordinates $\left(
a,b\right) $ are fixed by the relation $\mu =p\omega _{2}-a\alpha
_{2}-\left( b-1\right) \alpha _{1}$):\\[1mm]

\begin{tabular}{||c||cccccc|}
\hline\hline
$b\setminus a$ & 5/2 & \multicolumn{1}{||c}{2} & \multicolumn{1}{||c}{3/2} &
\multicolumn{1}{||c}{1} & \multicolumn{1}{||c}{1/2} & \multicolumn{1}{||c||}{
0} \\ \hline\hline
1 &  & $\frac{1}{2}p\left( p-3\right) $ &  & $p-1$ &  & $1$ \\ \cline{2-7}
& 0 &  & 0 &  & 0 &  \\ \cline{2-7}
2 &  & $
\begin{array}{c}
\frac{1}{3}\left( p-1\right) \cdot \\
\cdot \left( p+1\right) \left( p-3\right)
\end{array}
$ &  & $\frac{1}{2}p\left( p-1\right) $ &  & 0 \\ \cline{2-7}
& 0 &  & 0 &  & 0 &  \\ \cline{2-7}
3 &  & $
\begin{array}{c}
\frac{1}{12}\left( p-1\right) \left( p-2\right) \cdot \\
\cdot \left( p-3\right) \left( p+2\right)
\end{array}
$ &  & 0 &  &  \\ \hline
\end{tabular}
\\[2mm]

Correspondingly having solved the first fifteen equations for the vector
case we get the following table of functions $M^{\omega _{1}}\left( \left(
a,b\right) ,p\right) $, \\[1mm]

\noindent
\begin{tabular}{||c||c|c|c|c|}
\hline\hline
$b\setminus a$ & 0 & \multicolumn{1}{||c|}{1} & \multicolumn{1}{||c|}{2} &
\multicolumn{1}{||c||}{3} \\ \hline\hline
$p$ & $1$ & 0 & 0 & 0 \\ \hline
$p-1$ & $0$ & $p-1$ & 0 & 0 \\ \hline
$p-2$ & $\frac{1}{2}p\left( p-1\right) $ & $\frac{1}{2}\left( p-1\right)
\left( p-2\right) $ & $\frac{1}{2}p\left( p-3\right) $ & 0 \\ \hline
$p-3$ & ${\
\begin{array}{c}
\frac{1}{6}\left( p-1\right) \\
\left( p-2\right) \left( p-3\right)
\end{array}
}$ & ${\
\begin{array}{c}
\frac{1}{2}p\left( p-1\right) \\
\left( p-3\right)
\end{array}
}$ & ${\
\begin{array}{c}
\frac{1}{3}p\left( p-2\right) \\
\left( p-4\right)
\end{array}
}$ & ${\
\begin{array}{c}
\frac{1}{6}p\left( p-1\right) \\
\left( p-5\right)
\end{array}
}$ \\ \hline
\end{tabular}

\section{Weyl symmetry and solutions for recurrence equations}

When the algebra is simply laced, for example $\frak{g}=A_{n}$, the Weyl
symmetry was proven to be a highly effective tool to solve the set of
recurrences equations for the powers of the first fundamental module $%
L_{A_{n}}^{\omega _{1}}$ \cite{KulLyakhPost2011}. In the simplest case $%
\frak{g}=A_{1}$ the complete set of multiplicity functions for powers of an
arbitrary irreducible module were thus constructed.

In the case of $B_{2}$ the difficulties start when the vector fundamental
module is tensored. The recurrence equation can be solved successively, as
was shown in the previous section, but the complete solution for the
function $M^{\omega _{1}}\left( \left( a,b\right) ,p\right) $ was not found.

Nevertheless the recurrence property (\ref{main rec rel}) permits to
describe the general dependence of the multiplicities on one of the
coordinates. To see this consider the coordinates $(s,t)$ defined by the
relation $\mu =p\omega _{2}-t\alpha _{1}-\left( s-1\right) e_{1}$ . The $t$%
-dependence will be explicitly described, but only for limited values of $%
s=1,2,\ldots $. This description is based on the fact that the Conjecture 7
gives us an explicit answer to the ''first diagonal'' of multiplicities $%
\left\{ m_{\left( 1,t\right) }^{\left( 1\right) p}|t=0,1,\ldots \right\} $.
Starting with this expression and using the relation (\ref{main rec rel})
reformulated for the ''diagonal lines'' of functions we can find explicit
expressions for any such line provided the previous lines are known:
\begin{equation*}
M^{\omega_1 }\left( \left(1,t\right),p\right)=M^{\omega_2 }\left(
\left(\frac{p}{2},\frac{p}{2}-t\right),p\right)=\frac{\Gamma
\left( p+1\right) \left( p+1-2t\right) }{\Gamma \left(
p+2-t\right) \Gamma \left( t+1\right) },
\end{equation*}
\begin{equation*}
M^{\omega_1 }\left( \left(2,t\right),p\right)=\frac{\Gamma \left(
p+1\right) \left( p-t\right) \left( p-2t\right) }{\Gamma \left(
p+2-t\right) \Gamma \left( t\right) \left( t+1\right) },
\end{equation*}
\begin{equation*}
M^{\omega_1 }\left( \left(3,t\right),p\right)=\frac{\Gamma \left(
p+1\right) \left( p-2t-1\right) }{2\Gamma \left( p-t\right) \Gamma
\left( t+1\right) },
\end{equation*}
\begin{eqnarray*}
M^{\omega_1 }\left( \left(4,t\right),p\right) &=&\frac{\Gamma
\left( p+1\right)
\left( p-2t-2\right) }{6\Gamma \left( p+1-t\right) \Gamma \left( t+3\right) }%
\cdot  \\
&&\cdot \left(
\begin{array}{c}
\left( t^{2}+6t+2\right) p^{2}-2\left( t+2\right) ^{2}\left( t+1\right) p+
\\
+t^{4}+4t^{3}+8t^{2}+8t+6
\end{array}
\right) ,
\end{eqnarray*}
\begin{eqnarray*}
M^{\omega_1 }\left( \left(5,t\right),p\right) &=&\frac{\Gamma
\left( p+1\right)
\left( p-2t-3\right) }{24\Gamma \left( p-t\right) \Gamma \left( t+3\right) }%
\cdot  \\
&&\cdot \left(
\begin{array}{c}
\left( t^{2}+11t+6\right) p^{2}-\left( 2t+4\right) \left( t+1\right) \left(
t+2\right) p+ \\
+\left( t^{4}+4t^{3}+8t^{2}+8t+6\right)
\end{array}
\right) ,
\end{eqnarray*}
\begin{eqnarray*}
M^{\omega_1 }\left( \left(6,t\right),p\right) &=&\frac{\Gamma
\left( p+1\right)
\left( p-2t-4\right) }{120\Gamma \left( p-t\right) \Gamma \left( t+4\right) }%
\cdot  \\
&&\cdot \left(
\begin{array}{c}
\left( t^{3}+21t^{2}+86t+36\right) p^{3}- \\
-\left( 3t^{4}+54t^{3}+309t^{2}+654t+276\right) p^{2}+ \\
+\left( 3t^{5}+45t^{4}+326t^{3}+1086t^{2}+1408t+516\right) p- \\
-\left( t+1\right) \left( t+3\right) \left(
t^{4}+8t^{3}+68t^{2}+208t+12\right)
\end{array}
\right) ,
\end{eqnarray*}
and so on. We see that beginning from $M_{B_2 }^{\omega_1 }\left(
\left(4,t\right),p\right)$ only some factor of the multiplicity
function can be presented as a product of simple binomials like
$\left( p-x\right) $. In the forthcoming publications we shall
discuss this property in details.$M^{\omega_1 }\left(
\left(4,b\right),p\right)$

\section{Conclusions}

The tensor powers decomposition algorithm based on singular
weights and injection fan technique was proven to be an effective
tool in multiplicity property studies. Its abilities were
demonstrated on tensor powers decompositions of $B_2$-fundamental
modules. This algorithm is universal and can be applied to
investigate decomposition properties in case of an arbitrary
simple Lie algebra and its arbitrary module.

As it was predicted in \cite{KulLyakhPost2011} in non-simply laced case the
Weyl symmetry properties are insufficient to provide the final solution
for the corresponding set of recurrence relations for multiplicity functions
(at least this appeared to be true for the vector fundamental modules).
Nevertheless (this was shown above in our studies of fundamental $B_2$-modules)
important properties of multiplicity coefficients for any highest weight $\nu$
can be found by constructing the functions
$M^{\omega _{i}}\left( \left( \nu \right) ,p\right) $ successively i.e. by
constructing the solution for a final part of the full set of recurrence relations.

\section{Acknowledgments}
Supported by the Russian Foundation for Fundamental Research grant N 09-01-00504
and the "Dynasty" Foundation.

\end{document}